 \newcommand{\eps}{\varepsilon}
\newcommand{\Pn}{\mathrm{P}}
  \newcommand{\Tb}{\mathbf{T}}
\newcommand{\eqdef}{\coloneqq}
 \newcommand{\dd}{\mathrm{d}}
  \newcommand{\B}{\mathcal{B}}
 \newcommand{\N}{\mathbb{N}}
  \newcommand{\enumber}[1]{\operatorname{e}^{#1}}
 \newcommand{\X}{\mathcal{X}}
  \newcommand{\Disk}{\mathbb{D}}
 \newcommand{\Real}{\mathbb{R}}
\newcommand{\No}{\mathbb{N}_{0}}
\newcommand{\g}{\mathrm{g}}
 \newcommand{\Complex}{\mathbb{C}}
\newcommand{\Complexn}{\mathbb{C}^{n}}
 \newcommand{\Fock}{\mathcal{F}^{2}(\mathbb{C})}
 \newcommand{\RPlus}{\Real_{\geq0}}
 \newcommand{\Ele}{L_{2}}
 \newcommand{\Bergman}{\mathcal{A}^{2}}
\newcommand{\SQRTO}{\operatorname{RO}(\mathbb{N}_{0})}
\newcommand{\Lip}{\operatorname{Lip}(\mathbb{N}_{0})}
\newtheorem{theorem}{Theorem}[section]
\newtheorem{proposition}[theorem]{Proposition}
\newtheorem{lemma}[theorem]{Lemma}
\newtheorem{corollary}[theorem]{Corollary}
\theoremstyle{remark}%
\newtheorem{example}[theorem]{Example}%
\newtheorem{remark}[theorem]{Remark}%
\theoremstyle{definition}%
\newtheorem{definition}{Definition}%
\title{Constructive approximation of convergent
sequences\\
by eigenvalue sequences of radial Toeplitz--Fock operators}
\author{Kevin Esmeral García, Egor A. Maximenko}
\begin{document}

\maketitle

\begin{center}
In the memory of Prof. Nikolai Vasilevski,\\
our guide in this area of mathematics.
\end{center}

\begin{abstract}
It is well known that for every measurable function $a$, essentially bounded on the positive halfline,
the corresponding radial Toeplitz operator $T_a$, acting in the Segal--Bargmann--Fock space, is diagonal with respect to the canonical orthonormal basis consisting of normalized monomials.
We denote by $\gamma_a$
the corresponding eigenvalues sequence.
Given an arbitrary convergent sequence,
we uniformly approximate it
by sequences of the form $\gamma_a$
with any desired precision.
We give a simple recipe for constructing $a$
in terms of Laguerre polynomials.
Previously, we proved this approximation result
with nonconstructive tools
(Esmeral and Maximenko, ``Radial Toeplitz operators on the Fock space and square-root-slowly oscillating sequences'',
Complex Anal. Oper. Theory 10, 2016).
In the present paper, we also include some properties of the sequences $\gamma_a$
and some properties of bounded sequences,
uniformly continuous with respect to the sqrt-distance on natural numbers.

\bigskip\noindent
\textbf{Keywords:}
Toeplitz operator,
radial,
Fock space,
approximation,
Laguerre polynomial,
sqrt-distance.

\bigskip\noindent
\textbf{MSC:}
Primary 47B35; Secondary 30H20, 41A10, 30E05.
\end{abstract}

\maketitle

\medskip
\subsection*{Authors' data}

Kevin Esmeral García\\
https://orcid.org/0000-0003-1147-4730\\
kevin.esmeral@ucaldas.edu.co\\
Department of Mathematics, Universidad de Caldas,
Postal code 170004, Manizales, Colombia.

\medskip\noindent
Egor A. Maximenko\\
https://orcid.org/0000-0002-1497-4338\\
egormaximenko@gmail.com\\
Escuela Superior de F\'isica y Matemáticas,
Instituto Polit\'ecnico Nacional,
Postal code 07730, Mexico City, Mexico.

\section{Introduction}

Let $\Fock$ be the Segal--Bargmann--Fock space
(see~\cite{Fock,Segal,Zhu2012Fock}),
consisting of all entire functions that are square integrable
with respect to the Gaussian measure
\[
\frac{1}{\pi}\,\enumber{-|z|^{2}} \dd\nu(z),\quad z\in\Complex,
\]
where $\nu$ is the usual Lebesgue measure on $\Complex$.
We use the short name ``Fock space'' for $\Fock$.
It is well known that 
$\Fock$ is a reproducing kernel Hilbert space,
and the reproducing kernel $K_{z}$ at the point $z$ is given by
$K_{z}(w)=e^{\overline{z} w}$.
The functions $b_n(z)=\frac{z^n}{\sqrt{n!}}$,
$n\in\No=\{0,1,2,\ldots\}$,
form an orthonormal basis of $\Fock$.
We call this basis ``the canonical basis of $\Fock$''.

        
Given $\varphi$ in $L_{\infty}(\Complex)$,
the \emph{Toeplitz operator} $\Tb_{\varphi}$ with defining symbol $\varphi$
acts on $\Fock$ by the rule $\Tb_{\varphi}f=\Pn(f\varphi)$, where $\Pn$ stays for the Bargmann projection from $\Ele(\Complex,\dd\g)$ onto $\Fock$.
Operators of this kind have been extensively studied 
\cite{Bauer-Issa,Grudsky-Vasilevski,Isra-Zhu, Zhu2012Fock},
particularly in connection
with quantum mechanics \cite{Berger-Coburn,Coburn},
harmonic analysis \cite{Abreu-Faustino,M-Englis}, etc.
The C*-algebra generated by Toeplitz operators with $L_{\infty}$-symbols is not commutative.
However, Korenblum and Zhu, Vasilevski, Grudsky, Karapetyants, Quiroga-Barranco,
and other authors
\cite{Bauer-Le,KorenblumZhu1995,GrudskyKaraVasilevski,GrudskyKaraVasilevski-2,GrudskyKaraVasilevski-3,GrudskyQuirogaVasilevski2006,VasilevskiBook} discovered some classes of defining symbols that generate commutative C*-algebras of Toeplitz operators in Bergman and Fock spaces.
In this paper,
we study Toeplitz operators $\Tb_{\varphi}$ on $\Fock$,
supposing that the defining symbol $\varphi$ is a radial function,
i.e., there exists $g$ in $L_\infty(\RPlus)$,
$\RPlus\eqdef[0,+\infty)$,
such that $\varphi(z)=g(|z|)$ a.e. in $\Complex$.
Grudsky and Vasilevski \cite{Grudsky-Vasilevski} noticed that such operators are diagonal with respect to the canonical basis:
$\Tb_{\varphi}b_n = \gamma_g(n) b_n$, where
the eigenvalue sequence $\gamma_\varphi$ is given by
\begin{equation}
\label{gamma-radial-fock}
\gamma_{g}(n)
=
\frac{1}{n!}\int_{0}^{+\infty}
g(\sqrt{r}\,)\enumber{-r}r^{n}\,\dd r\quad (n\in\No).
\end{equation}
Here $\varphi(z)=g(|z|)$ a.e. in $\Complex$.

In 2008, Su\'{a}rez, in a wonderful research \cite{SuarezD}, studied the properties of eigenvalue sequences of radial Toeplitz operators acting on the Bergman space over the unit disk using the concepts of the $n$-Berezin transformation and the invariant Laplacian of a bounded operator, and introduced the space of sequences $d_{1}$:
\begin{align}\label{eq:d1}
d_1\eqdef
\left\{x \in \ell_{\infty}(\No)\colon\sup _{n\in\No}\left((n+1)\left|x_{n+1}-x_{n}\right|\,\right)<\infty\right\}.
\end{align}
One of the main results of Suárez states that the C*-algebra generated by radial Toeplitz operators with bounded generated symbols is isometrically isomorphic to the C*-algebra generated by $d_{1}$, this last being the topological closure of $d_{1}$ in $\ell_{\infty}(\No).$
Vasilevski wanted to complement Suárez's work with an explicit description of the closure of $d_{1}$.
In 2013, Grudsky, Maximenko, and Vasilevski~\cite{GrudskyMaximenkoVasilevski2013}
showed that the set
$\left\{\gamma_{a}\colon\,a\in\,L_{\infty}(0,1)\right\}$
and the space $d_{1}$ introduced by Su\'{a}rez~\cite{SuarezD}
are dense in $\operatorname{SO}(\No)$,
where $\operatorname{SO}(\No)$
is the C*-algebra of bounded sequences slowly oscillating with respect to the \emph{logarithmic distance}
$\displaystyle \eta(j,k)\eqdef\bigl|\ln(j+1)-\ln(k+1)\bigr|$.
Hence, the C*-algebra generated by radial Toeplitz operators acting on the Bergman space $\Bergman(\Disk)$
is isometrically isomorphic to $\operatorname{SO}(\No)$.

In 2012, Vasilevski suggested to us and to other colleagues to do something similar for the other cases where the C*-algebra generated by the Toeplitz operators is commutative in Bergman  and Fock spaces. This task was solved in a series of papers \cite{Esmeral-Maximenko-2014,Esmeral-Maximenko-2016,Esmeral-Maximenko-Vasilevski-2015,Esmeral-Vasilevski-2016,Herrera-Maximenko-Hutnik,Crispin-Vasilevski-Maximenko}.

In \cite{Esmeral-Maximenko-2016},
we studied the case of radial Toeplitz operators
acting on the Fock space.
Let $\gamma(L_{\infty}(\RPlus))$
be the set (vector space) of all eigenvalue sequences $\gamma_g$
given by~\eqref{gamma-radial-fock},
with essentially bounded defining symbols $g$:
\begin{equation}
\label{eq:set_gamma_L_infty}
\gamma(L_{\infty}(\RPlus))
\eqdef \bigl\{\gamma_g\colon\ g\in L_\infty(\RPlus)\bigr\}.
\end{equation}
Furthermore, let $\SQRTO$ be the C*-algebra
of all bounded sequences
$\sigma\colon\No\to\Complex$
that are uniformly continuous
with respect to the \emph{sqrt-distance}
\eqref{square-root-metric}.
In \cite{Esmeral-Maximenko-2016},
we showed that $\gamma(L_{\infty}(\RPlus))$ is a dense subset of $\SQRTO$.
As a corollary, we concluded that the C*-algebra generated by Toeplitz operators with radial $L_{\infty}$-symbols
is isometrically isomorphic to $\SQRTO$.
That proof from \cite{Esmeral-Maximenko-2016} consists of two parts.
The first one is constructive and shows that after the change of variables $\sqrt{n}=x$,
the eigenvalue sequences of the Toeplitz operators, viewed as a function $x\mapsto\gamma_{g}(x^{2})$,
for $x$ sufficiently large,
is close to the convolution of the symbol $g$
with the heat kernel.
The second one, \cite[Theorem 7.3]{Esmeral-Maximenko-2016},
is non-constructive.
By using the Hahn--Banach theorem, it shows that $c_{0}(\No)$ coincides with the uniform closure of the set
$\gamma(\X)=\{\gamma_{g}\colon\ g\in\X\}$, where 
\begin{equation}\label{eq:X}
    \X=\left\{g\in\,L_{\infty}\big(\RPlus\big)\colon\ \lim_{x\to+\infty}g(x)=0\right\}.
\end{equation}
More precisely, $\X$ consists of the equivalence classes of essentially bounded functions defined on $\RPlus$ that have limit $0$ at infinity.

Our main result from~\cite{Esmeral-Maximenko-2016}
was extended by  Dewage and  \'{O}lafsson~\cite{Dewage-Olafsson}
to the case of $k$-quasi-radial symbols acting on
$\mathcal{F}^{2}(\Complexn)$.
Recently, Dewage and  Mitkovski \cite{dewage2024} used tools from the Laplacian of a bounded operator and quantum harmonic analysis to show that the domain of the Laplacian of an operator is dense in the Toeplitz algebra over  $\Fock$, and as a by-product they revisited the Gelfand theory of the radial Toeplitz algebra on $\mathcal{F}^{2}(\Complexn)$, and provide a short proof of the main result in \cite{Esmeral-Maximenko-2016}. However, such a proof requires very advanced knowledge of quantum harmonic analysis \cite{Fulsche}. 
 
In this paper, we improve \cite{Esmeral-Maximenko-2016} in the following manner.
Let $C_0(\RPlus)$ be the set of all bounded continuous functions $f\in C_b(\RPlus)$ tending to $0$ at $+\infty$:
\begin{equation}\label{Def:C0}
C_0(\RPlus)
\eqdef
\Bigl\{f\in C_b(\RPlus)\colon
\lim_{x\to+\infty}f(x)=0
\Bigr\}.
\end{equation}
Notice that $C_0(\RPlus)$
is a closed subspace of $C_b(\RPlus)$.
Moreover, $C_0(\RPlus)$ is a subset of $\X$.
Let $\gamma(C_{0}(\RPlus))$ be the set of the eigenvalue sequences
corresponding to the defining symbols of class $C_0(\RPlus)$:
\begin{equation}\label{eq:def-Gamma0}
\gamma(C_{0}(\RPlus))
\eqdef
\left\{\gamma_{g}\in c_{0}(\No)\colon g\in C_{0}(\RPlus)\right\}.
\end{equation}
We give a constructive proof
that $c_0(\No)$ coincides with the uniform closure of $\gamma(C_{0}(\RPlus))$.
Namely,
given $\sigma$ in $c_{0}(\No)$,
we construct a sequence $(u_\xi)_{m\in\N}$
of defining symbols of class $C_0(\RPlus)$,
such that 
$(\gamma_{u_{\xi}})_{\xi\in\N}$
converges to $\sigma$ uniformly.
Our recipe for $u_\xi$ is very direct and simple;
it is given in terms of Laguerre polynomials.

The paper is organized as follows.
In Section \ref{section-sqrto-lip},
we prove that $\SQRTO$
is an invariant subspace of the left and right shift operators.
In addition, we study the space $\Lip$ of the bounded sequences that are Lipschitz continuous
with respect to sqrt-metric $\rho$. 

In Section~\ref{sec:gammas_belong_to_RO},
we study some properties of $\gamma_g$.
In particular, we prove that the space
$\gamma(L_{\infty}(\RPlus))$
is an invariant subspace of the left and right shift operators.

Section \ref{Laguerre+radial-T}
is devoted to prove our main approximation results
(Theorems~\ref{thm-gamma-delta},
\ref{thm:gamma-dense-c0},
and~\ref{thm:gamma-dense-c}).

We hope that the results and ideas of this paper will be useful in future investigations related to radial operators on Fock or Bergman spaces.

In particular, it would be interesting to find a simple formula to approximate sequences of the class $\SQRTO$ by sequences of the form $\gamma_g$, where $g\in L_\infty(\RPlus)$.

In this paper, we do not consider Toeplitz operators and eigenvalue sequences induced by non-bounded defining symbols;
that problem is discussed in~\cite{Grudsky-Vasilevski,Ramirez-Rossini-Sanmartino}.

\section{Uniformly continuous sequences with respect to the sqrt-distance}
\label{section-sqrto-lip}

In this section, we develop some ideas introduced in our previous paper \cite{Esmeral-Maximenko-2016}. 

Let $\rho\colon\No\times\No\rightarrow\RPlus$ be the square-root metric (distance) on $\No$ given by
\begin{equation}
\label{square-root-metric}
\rho(m,n)\eqdef\left|\sqrt{m}-\sqrt{n}\,\right|.
\end{equation}
Given a complex sequence $\sigma = (\sigma(j))_{j \in \No}$,
its \emph{modulus of continuity}
(or \emph{indicator of uniform continuity})
$\omega_{\rho,\sigma}\colon (0,+\infty) \to [0,+\infty]$ with respect to the sqrt-metric \eqref{square-root-metric}  by
\[
\omega_{\rho,\sigma}(\delta)
\eqdef
\sup\, \bigl\{ |\sigma(j) - \sigma(k)|\colon\ 
j,k \in \No,\ \rho(j,k) \leq \delta \bigl\}.
\]
Note that for every sequence $\sigma$, the function $\omega_{\rho,\sigma}$ is increasing (in the nonstrict sense). Therefore, the condition $\displaystyle\lim_{\delta \to 0^+} \omega_{\rho,\sigma}(\delta) = 0$ is equivalent to the following one: for every $\varepsilon > 0$, there exists $\delta > 0$ such that $\omega_{\rho,\sigma}(\delta) < \varepsilon$. 

 We denote by $\SQRTO$ the set of the bounded sequences
that are uniformly continuous
with respect to the sqrt-metric:
\begin{equation}
\SQRTO =\left\{\sigma\in\ell_{\infty}(\No)\colon
\lim_{\delta\to 0^{+}}\omega_{\rho,\sigma}(\delta)=0\right\}.
\end{equation}
A more precise notation for this class of sequences would be $\operatorname{BUC}(\No,\rho,\Complex)$. Nevertheless,
in this paper, we prefer using the short notation $\SQRTO$.

\subsection{Lipschitz continuous sequences with respect to the sqrt-metric}
According to general definitions
of Lipschitz continuous functions between metric spaces~\cite[Subsection 1.3.2]{Cobzas-Miculescu-Nicolae},
a sequence $\sigma\colon\No\to\Complex$ is called
\emph{Lipschitz continuous} with respect to $\rho$, if
\begin{equation}
\label{eq:Lip_def}
\sup_{\substack{j,k\in\No\\j\ne k}}
\frac{|\sigma(j)-\sigma(k)|}{\rho(j,k)}
<+\infty.
\end{equation}
It is obvious that 
$\sigma$ is Lipschitz continuous
if and only if
\begin{equation}
\label{eq:Lip_via_omega}
\exists L\in[0,+\infty)\qquad
\forall \delta>0\qquad
\omega_{\rho,\sigma}(\delta) \le L\,\delta.
\end{equation}

\begin{definition}
We denote by $\operatorname{BLip}(\No, \rho,\Complex)$
or shortly by $\Lip$ the set of all bounded sequences that are Lipschitz continuous with respect to $\rho$. 
\end{definition}

\begin{proposition}
\label{Prop:lip-acot}
Let $\sigma\colon\No\to\Complex$.
Then $\sigma$ is Lipschitz continuous with respect to $\rho$, if and only if,
\begin{equation}
\label{eq:Lip_criterion}
\sup_{n\in\No}
\left(\sqrt{n+1}\,\big|\sigma(n+1)-\sigma(n)\big|\right)
<+\infty.
\end{equation}
\end{proposition}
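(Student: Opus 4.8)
The plan is to prove both implications by comparing the one-step increments $|\sigma(n+1)-\sigma(n)|$ with the elementary gaps $\sqrt{n+1}-\sqrt{n}$ of the sqrt-metric, using the identity $\sqrt{n+1}-\sqrt{n}=1/(\sqrt{n+1}+\sqrt{n})$. For the ``only if'' direction, I would assume $\sigma$ is Lipschitz with some constant $L$ as in \eqref{eq:Lip_def}, apply this inequality to the consecutive pair $(n,n+1)$ to get $|\sigma(n+1)-\sigma(n)|\le L(\sqrt{n+1}-\sqrt{n})=L/(\sqrt{n+1}+\sqrt{n})$, and then multiply by $\sqrt{n+1}$ to obtain $\sqrt{n+1}\,|\sigma(n+1)-\sigma(n)|\le L\sqrt{n+1}/(\sqrt{n+1}+\sqrt{n})\le L$, which is exactly \eqref{eq:Lip_criterion}.

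For the ``if'' direction, I would set $M\eqdef\sup_{n\in\No}\left(\sqrt{n+1}\,|\sigma(n+1)-\sigma(n)|\right)$, assumed finite, so that $|\sigma(n+1)-\sigma(n)|\le M/\sqrt{n+1}$ for every $n$. Given $j,k\in\No$ with $j<k$, telescoping and the triangle inequality give $|\sigma(k)-\sigma(j)|\le\sum_{n=j}^{k-1}|\sigma(n+1)-\sigma(n)|\le M\sum_{n=j}^{k-1}1/\sqrt{n+1}$, while on the other side $\rho(j,k)=\sqrt{k}-\sqrt{j}=\sum_{n=j}^{k-1}(\sqrt{n+1}-\sqrt{n})=\sum_{n=j}^{k-1}1/(\sqrt{n+1}+\sqrt{n})$. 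The key elementary step is the term-by-term bound $1/\sqrt{n+1}\le 2/(\sqrt{n+1}+\sqrt{n})$, valid since $\sqrt{n+1}+\sqrt{n}\le 2\sqrt{n+1}$; summing it over $n$ from $j$ to $k-1$ yields $|\sigma(k)-\sigma(j)|\le 2M\,\rho(j,k)$. The case $j=k$ is trivial and the case $j>k$ follows by symmetry, so $\sigma$ is Lipschitz with constant at most $2M$, giving \eqref{eq:Lip_def}.

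I do not expect any genuine obstacle here; the only point that requires a moment's care is choosing the comparison constant so that the harmonic-type partial sum $\sum 1/\sqrt{n+1}$ is dominated by the telescoping sum representing $\sqrt{k}-\sqrt{j}$, and the factor $2$ is the natural and in fact sharp-up-to-constants choice. As a by-product, combining the two directions shows that the Lipschitz seminorm of $\sigma$ and the supremum in \eqref{eq:Lip_criterion} are equivalent, differing by at most the factor $2$; this quantitative remark could be recorded after the proof if it is useful later.
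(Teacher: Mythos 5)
Your proof is correct and complete in both directions. The paper itself gives no argument here, deferring instead to Proposition~3.2 of the earlier reference, so there is no in-text proof to compare against; your self-contained argument is the natural one: for the easy direction apply the Lipschitz bound to consecutive indices and use $\sqrt{n+1}/(\sqrt{n+1}+\sqrt{n})\le 1$, and for the converse telescope $|\sigma(k)-\sigma(j)|$ and $\sqrt{k}-\sqrt{j}$ simultaneously and compare term by term via $1/\sqrt{n+1}\le 2/(\sqrt{n+1}+\sqrt{n})$, which yields the Lipschitz constant $2M$. One small caveat on your final remark: the factor $2$ is the right order but not literally sharp; the ratio $\frac{\sqrt{n+1}+\sqrt{n}}{2\sqrt{n+1}}$ tends to $1$ as $n\to\infty$ and equals $1/2$ at $n=0$, so the two-sided comparison of the Lipschitz seminorm with the supremum in \eqref{eq:Lip_criterion} is an equivalence up to the factor $2$, not an equality, and the extremal configuration is spread over all $n$ rather than attained at one index.
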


\begin{proof}
See~\cite[Proposition 3.2]{Esmeral-Maximenko-2016}.
\end{proof}

The condition \eqref{eq:Lip_criterion} is similar to the condition in the formula \eqref{eq:d1} which defines Lipschitz continuous sequences with respect to the logarithmic distance.

It follows directly from the definitions (and it is well known in the general context of metric spaces)
that Lipschitz continuous sequences are uniformly continuous with respect to $\rho$.
Therefore, $\Lip\subseteq\SQRTO$.
In the following example,
we show that this inclusion is strict.

\begin{example}
\label{example:Lip_ne_SQRTO}
Define $\sigma\colon\No\to\Complex$ by
\[
\sigma(j)
\eqdef
\sqrt{\bigl|\sin\bigl(\pi\sqrt{j}\bigr)\bigr|}.
\]
Obviously, $\sigma$ is a bounded sequence.
Using the elementary inequality
\[
|x-y|^2\le|x^2-y^2|\qquad(x,y\in\Complex)
\]
and the Lipschitz property of $\sin$,
we get the following upper estimate for every $j,k$ in $\No$:
\begin{align*}
\bigl|\sigma(j)-\sigma(k)\bigr|^{2}
&\le
\bigl|\sigma(j)^2-\sigma(k)^2\bigr|
=\left|\sin\bigl(\pi\sqrt{j}\bigr)
-\sin\bigl(\pi\sqrt{k}\bigr)\right|
\\
&\le \pi \bigl|\sqrt{j}-\sqrt{k}\,\bigr|=\pi\rho(j,k).
\end{align*}
Therefore,
$\displaystyle\bigl|\sigma(j)-\sigma(k)\bigr|\leq \sqrt{\pi\rho(j,k)}$.
This means that $\sigma$ is $\frac{1}{2}$-H\"{o}lder continuous with respect to $\rho$.
In particular, $\sigma\in\SQRTO$. 

To prove that $\sigma\notin\Lip$,
we consider the expression from
Proposition~\ref{Prop:lip-acot}
for the sequence of indices $\nu(m)=m^2$.
Notice that
\[
\sigma(\nu(m))
=\sqrt{\bigl|\sin(\pi m)\bigr|}=0.
\]
Furthermore, since
\[
\pi\sqrt{m^2+1}
=\pi m+\pi\bigl(\sqrt{m^2+1}-m\bigr)
=\pi m+\frac{\pi}{\sqrt{m^2+1}+m},
\]
the subsequence $\bigl(\sigma(\nu(m)+1)\bigr)_{m\in\No}$
has the following asymptotic behavior as $m\to\infty$:
\[
\sigma(\nu(m)+1)
=\sqrt{\bigl|\sin\bigl(\pi\sqrt{m^2+1}\bigr)\bigr|}
=\sqrt{\frac{\pi}{\sqrt{m^2+1}+m}}
\sim \sqrt{\frac{\pi}{2m}}.
\]
Therefore,
\[
\lim_{m\to\infty}
\Bigl(\sqrt{\nu(m)+1} \,\bigl|\sigma(\nu(m)+1)-\sigma(\nu(m))\bigr|
=
\lim_{m\to\infty}
\sqrt{\frac{\pi(m^2+1)}{2m}}
=+\infty.
\]
By~Proposition~\ref{Prop:lip-acot},
$\sigma$ is not Lipschitz continuous with respect to $\rho$.
\end{example}

Let us give an example of a sequence belonging to $\Lip$ and $\SQRTO$.

\begin{example}
\label{ex:sqrt_cos}
Let $\sigma=\bigl(\cos\bigl(\sqrt{j}\bigr)\bigr)_{j\in\No}$.
For every $j,k$ in $\No$,
\[
\bigl|\sigma(j)-\sigma(k)\bigr|
=
\bigl|\cos\bigl(\sqrt{j}\bigr)-\cos\bigl(\sqrt{k}\bigr)\bigr|
\le
\bigl|\sqrt{j}-\sqrt{k}\,\bigr|
=
\rho(j,k).
\]
Therefore, $\sigma\in\Lip$
and $\sigma\in\SQRTO$.
\end{example}

The following lemma means that the sqrt-distance $\rho(j,k)$ can take small nonzero values only for large arguments $j$ and $k$.

\begin{lemma}
\label{lem:min_jk_ge_rhojk}
Let $j,k\in\No$
such that $j\ne k$ and
$\rho(j,k)<1/2$.
Then,
\begin{equation}
\label{eq:min_jk_ge_rhojk}
\min\{j,k\}
\ge \frac{1}{(2\rho(j,k))^2}-1.
\end{equation}
\end{lemma}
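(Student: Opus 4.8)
The plan is to start from the hypothesis $\rho(j,k) = |\sqrt{j}-\sqrt{k}| < 1/2$ and manipulate it into a lower bound on $\min\{j,k\}$. Without loss of generality assume $j < k$ (the roles are symmetric and $j \ne k$), so $\min\{j,k\} = j$ and $\rho(j,k) = \sqrt{k} - \sqrt{j}$. Since $j,k$ are integers with $j < k$, we have $k \ge j+1$, hence $\sqrt{k} \ge \sqrt{j+1}$ and therefore
\[
\rho(j,k) = \sqrt{k} - \sqrt{j} \ge \sqrt{j+1} - \sqrt{j} = \frac{1}{\sqrt{j+1}+\sqrt{j}}.
\]
This is the key inequality; everything else is routine.

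Next I would bound the denominator $\sqrt{j+1}+\sqrt{j}$ from above by $2\sqrt{j+1}$, which gives
\[
\rho(j,k) \ge \frac{1}{2\sqrt{j+1}},
\]
or equivalently $\sqrt{j+1} \ge \frac{1}{2\rho(j,k)}$. Squaring both sides (both are positive) yields $j+1 \ge \frac{1}{(2\rho(j,k))^2}$, i.e. $j \ge \frac{1}{(2\rho(j,k))^2} - 1$, which is exactly \eqref{eq:min_jk_ge_rhojk}. The hypothesis $\rho(j,k) < 1/2$ is what makes the bound non-vacuous (it guarantees the right-hand side is positive), though the chain of inequalities itself holds regardless.

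There is really no main obstacle here — the only point requiring a little care is the very first step, where I use that $k$ and $j$ are natural numbers to pass from $k > j$ to $k \ge j+1$, and the monotonicity of the square root to conclude $\sqrt{k}-\sqrt{j} \ge \sqrt{j+1}-\sqrt{j}$. The rationalization identity $\sqrt{j+1}-\sqrt{j} = 1/(\sqrt{j+1}+\sqrt{j})$ and the subsequent estimate $\sqrt{j}+\sqrt{j+1} \le 2\sqrt{j+1}$ are elementary. I would write the proof in four or five lines.
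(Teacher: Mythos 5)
Your proof is correct and follows exactly the same route as the paper: assume $j<k$, bound $\rho(j,k)\ge\sqrt{j+1}-\sqrt{j}=1/(\sqrt{j+1}+\sqrt{j})\ge 1/(2\sqrt{j+1})$, and solve for $j$. Your added remark that the hypothesis $\rho(j,k)<1/2$ is only there to keep the bound non-vacuous is accurate and slightly sharper than what the paper states.
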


\begin{proof}
Suppose that $j<k$ (the case when $j>k$ is similar).
Then,
\[
\rho(j,k)
=\sqrt{k}-\sqrt{j}
\ge \sqrt{j+1}-\sqrt{j}
=\frac{1}{\sqrt{j+1}+\sqrt{j}}
\ge \frac{1}{2\sqrt{j+1}}.
\]
Solving this equality for $j$,
we get the lower estimate~\eqref{eq:min_jk_ge_rhojk}.
\end{proof}

\begin{proposition}
    $c(\No)$ is a proper subset of $\SQRTO$:  $c(\No)\subsetneq\SQRTO$.
\end{proposition}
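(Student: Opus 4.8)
The plan is to establish the two halves of $c(\No)\subsetneq\SQRTO$ in turn: that every convergent sequence is bounded and uniformly continuous with respect to $\rho$, and that $\SQRTO$ contains a sequence which does not converge. Lemma~\ref{lem:min_jk_ge_rhojk} is precisely the tool for the first half, while the second half can be settled by reusing an example already appearing in the paper.

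For the inclusion $c(\No)\subseteq\SQRTO$, I would take a convergent sequence $\sigma$ with limit $L$. It is automatically bounded, so $\sigma\in\ell_\infty(\No)$, and it remains only to prove that $\omega_{\rho,\sigma}(\delta)\to0$ as $\delta\to0^+$. Given $\varepsilon>0$, one chooses $N\ge1$ with $|\sigma(j)-L|<\varepsilon/2$ for all $j\ge N$, and sets $\delta\eqdef 1/(2\sqrt{N+1})$, which is strictly less than $1/2$. If $j,k\in\No$ satisfy $\rho(j,k)\le\delta$ and $j\ne k$, then Lemma~\ref{lem:min_jk_ge_rhojk} forces $\min\{j,k\}\ge(2\delta)^{-2}-1=N$, so both $\sigma(j)$ and $\sigma(k)$ lie within $\varepsilon/2$ of $L$, whence $|\sigma(j)-\sigma(k)|<\varepsilon$; the case $j=k$ is trivial. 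Thus $\omega_{\rho,\sigma}(\delta)\le\varepsilon$, and since $\varepsilon$ was arbitrary, $\sigma\in\SQRTO$.

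To see that the inclusion is strict, I would invoke the sequence $\sigma=(\cos\sqrt{j}\,)_{j\in\No}$ from Example~\ref{ex:sqrt_cos}, which is already known to belong to $\Lip\subseteq\SQRTO$, and show that it has no limit. The idea is to extract two subsequences with distinct limits: setting $j_n\eqdef\lfloor(2\pi n)^2\rfloor$ and $k_n\eqdef\lfloor(2\pi n+\pi)^2\rfloor$, the elementary estimate $0\le t-\sqrt{\lfloor t^2\rfloor\,}\le 1/t$ for $t\ge1$ (which follows from $t^2-1\le\lfloor t^2\rfloor\le t^2$) shows that $\sqrt{j_n}$ differs from $2\pi n$, and $\sqrt{k_n}$ from $2\pi n+\pi$, by $O(1/n)$; then the $2\pi$-periodicity and continuity of the cosine give $\cos\sqrt{j_n}\to1$ and $\cos\sqrt{k_n}\to-1$. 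Hence $\sigma\in\SQRTO\setminus c(\No)$, which together with the first inclusion proves $c(\No)\subsetneq\SQRTO$. The only mildly delicate point in the whole argument is this floor estimate, ensuring that the fractional parts of $\sqrt{j_n}$ and $\sqrt{k_n}$ vanish in the limit; everything else is immediate from Lemma~\ref{lem:min_jk_ge_rhojk} and Example~\ref{ex:sqrt_cos}.
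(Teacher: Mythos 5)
Your proof is correct and follows essentially the same route as the paper: the inclusion $c(\No)\subseteq\SQRTO$ is derived from Lemma~\ref{lem:min_jk_ge_rhojk} with the same choice $\delta=1/(2\sqrt{N+1})$ (the paper phrases it via the Cauchy criterion rather than the limit $L$, but this is cosmetic), and properness is witnessed by $(\cos\sqrt{j}\,)_{j\in\No}$ from Example~\ref{ex:sqrt_cos}. The one place you add substance is in actually verifying that $(\cos\sqrt{j}\,)_{j}$ has no limit via the subsequences $j_n=\lfloor(2\pi n)^2\rfloor$ and $k_n=\lfloor(2\pi n+\pi)^2\rfloor$ together with the estimate $0\le t-\sqrt{\lfloor t^2\rfloor}\le 1/t$; the paper simply asserts this non-convergence without proof, so your argument fills in a small detail left implicit there.
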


\begin{proof}
Let $\sigma\in c(\No)$.
Then, $\sigma$ is a Cauchy sequence.
Given $\eps>0$,
we choose $N$ in $\No$ such that
for all $m$ and $n$ with $m>n\ge N$,
\begin{equation}
\label{eq:difference_sigma_less_half_eps}
\bigl|\sigma(n)-\sigma(m)\bigr|<\frac{\eps}{2}.
\end{equation}
Let
$\delta\eqdef\frac{1}{2\sqrt{N+1}}$.
If $m>n$ and $\rho(m,n)\le\delta$,
then, by Lemma~\ref{lem:min_jk_ge_rhojk},
we get $n\ge \frac{1}{(2\delta)^2}-1=N$,
which implies~\eqref{eq:difference_sigma_less_half_eps}.
Therefore,
$\bigl|\sigma(m)-\sigma(n)\bigr|<\eps/2$.
We have proven that
\[
\omega_{\rho,\sigma}(\delta)
\le\frac{\eps}{2}
<\eps.
\]
Therefore, $\sigma\in\SQRTO$.
This means that $c(\No)\subseteq\SQRTO$.
On the other hand,
the sequence considered in Example~\ref{ex:sqrt_cos}
belongs to $\SQRTO\setminus c(\No)$.
\end{proof}

In the context of a general metric space $(X,d)$,
it is well known that every bounded and uniformly continuous complex function on $X$ can be uniformly approximated
by Lipschitz continuous functions;
see~\cite[Corollary 6.3.2]{Cobzas-Miculescu-Nicolae}.
Therefore,
$\overline{\Lip}= \SQRTO$.
Next, we provide a more elementary proof of this fact
using averaging techniques that are analogs
of Vall\'{e}e--Poussin means, in our context.
A similar idea was proposed in~\cite[Lemma 4.4]{GrudskyMaximenkoVasilevski2013}
for the logarithmic distance in $\No$.
Such techniques are widely used in the approximation theory~\cite{Timan}.

\begin{lemma}
\label{lemma-aprox-vallee-poussin-mean}
Let $\sigma \in \ell_{\infty}(\No)$ and $\delta \in(0,1)$.
Define $y\colon\No\to\Complex$ by
\begin{equation}\label{eq:vallee-poussin-mean}
    y(j)=\frac{1}{1+r_{j}} \sum_{k=j}^{j+r_{j}} \sigma(k),
    \quad \text{where}\quad
    r_{j}=\lfloor \delta \sqrt{j}\rfloor.
\end{equation}
Then, $y \in \Lip$ and
\begin{equation}
\label{eq:lemma44}
\|y-\sigma\|_{\infty} \leq \omega_{\rho, \sigma}(\delta) .
\end{equation}
\end{lemma}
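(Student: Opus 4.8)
The plan is to prove the two assertions of the lemma separately: first the approximation estimate \eqref{eq:lemma44}, which will simultaneously give boundedness of $y$, and then the Lipschitz property, which is the real content.

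For \eqref{eq:lemma44}, I would fix $j\in\No$ and use the fact that the sum defining $y(j)$ has exactly $1+r_j$ terms to write $y(j)-\sigma(j)=\frac{1}{1+r_j}\sum_{k=j}^{j+r_j}\bigl(\sigma(k)-\sigma(j)\bigr)$. For each $k$ with $j\le k\le j+r_j$ one has $\rho(j,k)=\sqrt{k}-\sqrt{j}\le\sqrt{j+r_j}-\sqrt{j}=\frac{r_j}{\sqrt{j+r_j}+\sqrt{j}}\le\frac{r_j}{2\sqrt{j}}\le\frac{\delta}{2}\le\delta$, where I used $r_j\le\delta\sqrt{j}$ (the case $j=0$ being trivial since then $r_0=0$). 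Hence $|\sigma(k)-\sigma(j)|\le\omega_{\rho,\sigma}(\delta)$ for all such $k$, and averaging yields $|y(j)-\sigma(j)|\le\omega_{\rho,\sigma}(\delta)$; taking the supremum over $j$ gives \eqref{eq:lemma44}. The same averaging trivially shows $|y(j)|\le\|\sigma\|_{\infty}$, so $y\in\ell_{\infty}(\No)$.

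For the Lipschitz property I would invoke Proposition~\ref{Prop:lip-acot}, so that it suffices to bound $\sqrt{n+1}\,|y(n+1)-y(n)|$ uniformly in $n$. I would first record two elementary observations. (i) $r_{n+1}-r_n\in\{0,1\}$, since $r_n\le r_{n+1}$ and $\delta\sqrt{n+1}-\delta\sqrt{n}=\frac{\delta}{\sqrt{n+1}+\sqrt{n}}\le\delta<1$. (ii) $\frac{\sqrt{n+1}}{1+r_n}\le\frac{\sqrt{2}}{\delta}$ for every $n$: if $r_n\ge1$ then $n\ge1/\delta^{2}\ge1$ and $1+r_n>\delta\sqrt{n}$, so $\frac{\sqrt{n+1}}{1+r_n}<\frac{1}{\delta}\sqrt{1+1/n}\le\frac{\sqrt2}{\delta}$; if $r_n=0$ then $n<1/\delta^{2}$, hence $\sqrt{n+1}<\sqrt{1+\delta^{-2}}\le\frac{\sqrt2}{\delta}$ (using $\delta<1$).

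Then I would split into the two cases allowed by (i). If $r_{n+1}=r_n=:r$, the two averages share all interior terms and a telescoping cancellation gives $y(n+1)-y(n)=\frac{\sigma(n+1+r)-\sigma(n)}{1+r}$, so $|y(n+1)-y(n)|\le\frac{2\|\sigma\|_{\infty}}{1+r_n}$. If $r_{n+1}=r_n+1=:r+1$, writing $S=\sum_{k=n}^{n+r}\sigma(k)$ one obtains by a direct rearrangement $y(n+1)-y(n)=-\frac{S}{(r+1)(r+2)}+\frac{\sigma(n+r+1)+\sigma(n+r+2)-\sigma(n)}{r+2}$, and since $|S|\le(r+1)\|\sigma\|_{\infty}$ this gives $|y(n+1)-y(n)|\le\frac{4\|\sigma\|_{\infty}}{r+2}\le\frac{4\|\sigma\|_{\infty}}{1+r_n}$. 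In both cases, multiplying by $\sqrt{n+1}$ and applying (ii) yields $\sqrt{n+1}\,|y(n+1)-y(n)|\le\frac{4\sqrt2\,\|\sigma\|_{\infty}}{\delta}$, a bound independent of $n$. By Proposition~\ref{Prop:lip-acot}, $y$ is Lipschitz continuous with respect to $\rho$; together with the boundedness shown above, $y\in\Lip$.

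The only slightly delicate points are the bookkeeping in the second case (tracking which terms enter each average when $r_n$ increases) and handling (ii) uniformly across the threshold where $r_n$ first becomes positive; neither constitutes a genuine obstacle, and the argument is otherwise a routine computation.
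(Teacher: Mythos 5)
Your proof is correct and follows essentially the same strategy as the paper's: averaging to get the approximation bound, the observation that $r_{n+1}-r_n\in\{0,1\}$, telescoping in each of the two cases to obtain $|y(n+1)-y(n)|\le 4\|\sigma\|_\infty/(1+r_n)$, and the lower bound $1+r_n\gtrsim\delta\sqrt{n}$ combined with Proposition~\ref{Prop:lip-acot}. The only real difference is that your observation (ii) handles the threshold $r_n=0$ explicitly, whereas the paper's chain $\sqrt{j+1}/(\delta\sqrt{j})\le\sqrt{2}/\delta$ is, strictly speaking, only meaningful for $j\ge1$ (though the final bound still holds at $j=0$); your version is a touch more careful there, but this is a cosmetic improvement rather than a different route.
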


\begin{proof}
For every $j$ in $\No$,
the sum in the right-hand side of~\eqref{eq:vallee-poussin-mean} contains $1+r_{j}$ terms.
Therefore,
$\|y\|_\infty \le \|\sigma\|_{\infty}$.

Let $j\in\No$.
Since $\delta(\sqrt{j+1}-\sqrt{j})<1$,
it is easy to see that $r_{j+1}=r_j$ or $r_{j+1}=r_j+1$.
For the case $r_{j}=r_{j+1}$, we have
\begin{equation}
\label{eq:rj=rj+1}
|y(j+1)-y(j)|
=
\dfrac{|\sigma(j+1+r_{j})-\sigma(j)|}{1+r_{j}}
\leq
\dfrac{2\|\sigma\|_{\infty}}{1+r_{j}}.
\end{equation}
Otherwise, $r_{j+1}=r_j + 1$, and
\[
y(j+1)
=\frac{1}{2+r_j}\sum_{k=j}^{j+r_j}\sigma(k)
+ \frac{\sigma(j+r_j+1)+\sigma(j+2+r_j)-\sigma(j)}{2+r_j},
\]
and
\[
|y(j+1)-y(j)|
\le
\left(\frac{1}{1+r_j}-\frac{1}{2+r_j}\right)
\sum_{k=j}^{j+r_j}|\sigma(k)|
+\frac{3\|\sigma\|_{\infty}}{2+r_j},
\]
which implies
\begin{equation}
\label{eq:rj+1=rj+1}
|y(j+1)-y(j)| \le \frac{4\|\sigma\|_\infty}{1+r_j}.
\end{equation}
Notice that
$1+r_j=1+\lfloor \delta\sqrt{j}\rfloor
\ge \delta \sqrt{j}$.
By \eqref{eq:rj=rj+1} and \eqref{eq:rj+1=rj+1},
for any $j$ in $\No$,
\[
\sqrt{j+1}\,|y(j+1)-y(j)|
\leq
4 \|\sigma\|_{\infty}
\dfrac{\sqrt{j+1}}{\delta\,\sqrt{j}}
\leq
\dfrac{4\sqrt{2}\,\|\sigma\|_{\infty}}{\delta}.
\]
The last expression does not depend on $j$.
According to Proposition~\ref{Prop:lip-acot},
we conclude that $y \in \Lip$.

Now, let us prove \eqref{eq:lemma44}.
If $j \leq k \leq j+r_j$, then
\[
\rho(j, k)
\leq
\sqrt{j+r_j}-\sqrt{j}
=
\dfrac{r_j}{\sqrt{j+r_j}+\sqrt{j}}
\leq
\dfrac{\lfloor \delta\sqrt{j}\rfloor}{\sqrt{j}}\leq\delta .
\]
Therefore,
\[
|y(j)-\sigma(j)|
\leq
\frac{1}{1+r_j}
\sum_{k=j}^{j+r_j}
|\sigma(k)-\sigma(j)|
\leq \omega_{\rho, \sigma}(\delta).
\qedhere
\]
\end{proof}

\begin{proposition}
\label{prop:Lip-dense-SQRTO}
$\Lip$ is a dense subset of $\SQRTO$.
\end{proposition}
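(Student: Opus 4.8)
The plan is to deduce $\overline{\Lip}=\SQRTO$ directly from Lemma~\ref{lemma-aprox-vallee-poussin-mean}. First I would note that the inclusion $\Lip\subseteq\SQRTO$ has already been established (Lipschitz continuity with respect to a metric implies uniform continuity, via~\eqref{eq:Lip_via_omega}), and since $\SQRTO$ is a subspace of $\ell_\infty(\No)$ that is closed in the sup-norm — this follows from the standard fact that a uniform limit of uniformly continuous functions is uniformly continuous — it only remains to show that every $\sigma\in\SQRTO$ lies in the closure of $\Lip$.

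So let $\sigma\in\SQRTO$ and fix $\eps>0$. Since $\sigma$ is uniformly continuous with respect to $\rho$, we have $\lim_{\delta\to 0^+}\omega_{\rho,\sigma}(\delta)=0$, so we may pick $\delta\in(0,1)$ with $\omega_{\rho,\sigma}(\delta)<\eps$. Apply Lemma~\ref{lemma-aprox-vallee-poussin-mean} with this $\delta$ to obtain $y\in\Lip$ with $\|y-\sigma\|_\infty\le\omega_{\rho,\sigma}(\delta)<\eps$. Since $\eps>0$ was arbitrary, $\sigma\in\overline{\Lip}$. Combined with $\Lip\subseteq\SQRTO$ and the closedness of $\SQRTO$, this gives $\overline{\Lip}=\SQRTO$, i.e., $\Lip$ is dense in $\SQRTO$.

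There is essentially no obstacle here: the entire content has been packaged into the preceding lemma, whose Vallée–Poussin-type averaging construction simultaneously produces a Lipschitz sequence and controls the approximation error by the modulus of continuity. The only point that deserves an explicit sentence is the closedness of $\SQRTO$ in $\ell_\infty(\No)$, which one can justify by an $\eps/3$ argument: if $\sigma_n\to\sigma$ uniformly with each $\sigma_n\in\SQRTO$, then for given $\eps$ choose $n$ with $\|\sigma_n-\sigma\|_\infty<\eps/3$ and $\delta>0$ with $\omega_{\rho,\sigma_n}(\delta)<\eps/3$, whence $\omega_{\rho,\sigma}(\delta)\le 2\|\sigma_n-\sigma\|_\infty+\omega_{\rho,\sigma_n}(\delta)<\eps$. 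Thus the proof is just the assembly of Lemma~\ref{lemma-aprox-vallee-poussin-mean} with this elementary closedness remark.
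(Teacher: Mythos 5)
Your argument is correct and is essentially the paper's proof: both reduce the statement to Lemma~\ref{lemma-aprox-vallee-poussin-mean} by choosing $\delta$ so small that $\omega_{\rho,\sigma}(\delta)<\eps$ and taking the Vall\'{e}e--Poussin-type mean $y$. The additional remark on closedness of $\SQRTO$ in $\ell_\infty(\No)$ is correct but not needed for density of $\Lip$ \emph{in} $\SQRTO$, since $\Lip\subseteq\SQRTO$ was already noted and one only needs $\SQRTO\subseteq\overline{\Lip}$.
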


\begin{proof}
Let $\eps>0$ and $\sigma\in\SQRTO$.
Using the fact that $\omega_{\rho, \sigma}(\delta) \to 0$ as $\delta \to 0$,
we choose a $\delta>0$ such that
$\omega_{\rho, \sigma}(\delta)<\eps$,
and define $y$ by~\eqref{eq:vallee-poussin-mean}.
Then, by Lemma~\ref{lemma-aprox-vallee-poussin-mean},
$y \in \Lip$ and $\|\sigma-y\|_{\infty}<\eps$.
\end{proof}

\begin{definition}[shift operators in $\ell_\infty(\No)$]
We denote by $\tau_L$ and $\tau_R$ the left and right shift operators acting on $\ell_\infty(\No)$ by the following rules.
For $\sigma$ in $\ell_\infty(\No)$,
\begin{equation}
\label{eq:tauL_tauR}
\tau_{L}\sigma
\eqdef
\left(\sigma(1), \sigma(2), \sigma(3), \ldots\right),
\quad 
\tau_{R}\sigma
\eqdef\left(0, \sigma(0), \sigma(1), \ldots\right).
\end{equation}
More formally, for each $j$ in $\No$,
\begin{align}
(\tau_{L} \sigma)(j) \eqdef \sigma(j+1);
\qquad
(\tau_{R}\sigma)(j) \eqdef
\begin{cases}
0, & j=0; \\
\sigma(j-1), & j\in\N.
\end{cases}
\end{align}
\end{definition}

\noindent
Note that $\tau_{L}\left(\tau_{R}\sigma\right)=\sigma$ for every sequence $\sigma$.

\begin{lemma}
\label{lemm:rho-ineq}
For every $j, k \in \No$,
\begin{equation}
\label{eq:lemarho-ineq}
\dfrac{1}{\sqrt{6}}\,\rho(j, k)
\leq \rho(j+1, k+1)
\leq \rho(j,k).
\end{equation}
\end{lemma}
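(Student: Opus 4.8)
The right-hand inequality $\rho(j+1,k+1)\le\rho(j,k)$ follows immediately from the concavity of $t\mapsto\sqrt{t}$: the map $t\mapsto\sqrt{t+1}-\sqrt{t}$ is decreasing on $\RPlus$, so if, say, $j<k$, then $\sqrt{k+1}-\sqrt{j+1}\le\sqrt{k}-\sqrt{j}$ by summing the telescoping differences $\sqrt{i+1}-\sqrt{i}$ over $j\le i<k$ and comparing them term by term with $\sqrt{i+2}-\sqrt{i+1}$. Equivalently, one can write $\rho(j+1,k+1)=\frac{|(j+1)-(k+1)|}{\sqrt{j+1}+\sqrt{k+1}}=\frac{|j-k|}{\sqrt{j+1}+\sqrt{k+1}}\le\frac{|j-k|}{\sqrt{j}+\sqrt{k}}=\rho(j,k)$.

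For the left-hand inequality, the plan is to use the same rationalized form. Assuming WLOG $j<k$, we have
\[
\frac{\rho(j+1,k+1)}{\rho(j,k)}
=\frac{\sqrt{j}+\sqrt{k}}{\sqrt{j+1}+\sqrt{k+1}},
\]
and it suffices to bound this ratio below by $1/\sqrt{6}$. Since $\sqrt{i+1}\le\sqrt{i}+1$ and also $\sqrt{i+1}\le\sqrt{2}\,\sqrt{i}$ when $i\ge1$, the denominator is controlled; the only delicate point is the index $i=0$, where $\sqrt{0+1}=1$ while $\sqrt{0}=0$, so the naive bound $\sqrt{i+1}\le\sqrt{2}\sqrt{i}$ fails. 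I would therefore split into cases according to whether $j=0$ or $j\ge1$. If $j\ge1$, then $\sqrt{j+1}+\sqrt{k+1}\le\sqrt{2}(\sqrt{j}+\sqrt{k})$, giving the ratio $\ge1/\sqrt{2}\ge1/\sqrt{6}$. If $j=0$, then $k\ge1$ and the ratio equals $\frac{\sqrt{k}}{1+\sqrt{k+1}}$; using $\sqrt{k+1}\le\sqrt{2}\sqrt{k}$ and $1\le\sqrt{k}$, the denominator is at most $(1+\sqrt{2})\sqrt{k}<\sqrt{6}\,\sqrt{k}$ (since $1+\sqrt{2}\approx2.414<\sqrt{6}\approx2.449$), so the ratio is $\ge1/\sqrt{6}$.

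The main obstacle, then, is not any deep estimate but simply the boundary case $j=0$ (or $k=0$), where one of the square roots degenerates; the constant $1/\sqrt{6}$ is exactly what is needed to absorb this edge case, since $(1+\sqrt{2})^2=3+2\sqrt{2}<6$. Everything else is a routine manipulation of the identity $\sqrt{a}-\sqrt{b}=\frac{a-b}{\sqrt{a}+\sqrt{b}}$ together with monotonicity of the square root. An alternative, slightly slicker route avoiding cases: note $\sqrt{i+1}\le\sqrt{i}+1\le\sqrt{i}+\sqrt{i+1}\le\cdots$ is circular, so instead use $\sqrt{i+1}^2=i+1\le 6\max\{i,1\}\le 6(\sqrt i+\text{something})^2$; but the cleanest exposition really is the two-case argument above, and I would present it that way.
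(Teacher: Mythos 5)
Your proof is correct. Both your argument and the paper's begin from the same rationalization, writing $\rho(j+1,k+1)=\rho(j,k)\cdot\frac{\sqrt{j}+\sqrt{k}}{\sqrt{j+1}+\sqrt{k+1}}$, from which the upper bound $\rho(j+1,k+1)\le\rho(j,k)$ is immediate. The divergence is in how you establish the lower bound $\frac{\sqrt{j}+\sqrt{k}}{\sqrt{j+1}+\sqrt{k+1}}\ge\frac{1}{\sqrt 6}$. The paper squares the ratio, drops the $2\sqrt{jk}$ term from the numerator, applies AM--GM in the form $2\sqrt{(j+1)(k+1)}\le j+k+2$ to the denominator, and then minimizes $t\mapsto t/(t+2)$ over $t\ge 1$; you instead split into the cases $\min\{j,k\}\ge1$ (where $\sqrt{i+1}\le\sqrt2\,\sqrt i$ gives the much stronger bound $1/\sqrt2$) and $\min\{j,k\}=0$ (where the ratio is $\frac{\sqrt k}{1+\sqrt{k+1}}\ge\frac{1}{1+\sqrt2}>\frac{1}{\sqrt6}$, using $(1+\sqrt2)^2=3+2\sqrt2<6$). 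Your case split makes it transparent that the edge case $j=0$ or $k=0$ is the entire reason the constant is $1/\sqrt6$ rather than $1/\sqrt2$, which the paper's uniform AM--GM estimate obscures; on the other hand the paper's argument avoids any case distinction. Both are valid; yours is arguably more illuminating about where the constant comes from. One small remark: you should note (as the paper also leaves implicit) that the degenerate case $j=k$ is trivial, since then both $\rho(j,k)$ and $\rho(j+1,k+1)$ vanish and the rationalized ratio is $0/0$.
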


\begin{proof}
Let $j, k \in \No$.
By the inequality of arithmetic and geometric means,
\[
j+k+2\geq 2\sqrt{(j+1)\cdot (k+1)}.
\]
Therefore,
\begin{align*}
\rho(j+1, k+1)
&=
\rho(j,k)\,
\dfrac{\left(\sqrt{j}+\sqrt{k}\,\right)}{\left(\sqrt{k+1}+\sqrt{j+1}\,\right)}
\\
&=
\rho(j,k)\,
\sqrt{\dfrac{k+j+2\sqrt{kj}}{k+j+2+2\sqrt{(k+1)(j+1)}}}
\\
&\geq
\rho(j,k)\,
\sqrt{\dfrac{k+j}{k+j+2+2\sqrt{(k+1)(j+1)}}}
\\
&\geq
\rho(j,k)\,
\sqrt{\dfrac{k+j}{2(k+j+2)}}.
\end{align*}
It is easily seen that the function $f(t)=\dfrac{t}{t+2}$, $t\geq1$
is strictly increasing and attains its minimum value $1/3$
at the point $1$.
Therefore, \eqref{eq:lemarho-ineq} holds.
\end{proof}

Next, we show that $\SQRTO$ and $\Lip$ are invariant subspaces of $\tau_L$ and $\tau_R$.
This fact is similar to
\cite[Proposition 3.10]{GrudskyMaximenkoVasilevski2013}.

\begin{proposition}\label{prop:sigma-TauL-sqrto}
$\sigma \in \SQRTO$ if and only if $\tau_{L}\sigma\in \SQRTO$.   Moreover,
$\sigma \in \Lip$ if and only if $ \tau_{L}\sigma\in \Lip$.
\end{proposition}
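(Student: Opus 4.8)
The plan is to use the modulus-of-continuity characterization of $\SQRTO$ together with Lemma~\ref{lemm:rho-ineq}. The key observation is that the shift $\tau_L$ relates the moduli of continuity of $\sigma$ and $\tau_L\sigma$ by controllable inequalities. Explicitly, for $j,k\in\No$ we have $(\tau_L\sigma)(j)-(\tau_L\sigma)(k)=\sigma(j+1)-\sigma(k+1)$, and by the right inequality in~\eqref{eq:lemarho-ineq}, $\rho(j,k)\le\delta$ forces $\rho(j+1,k+1)\le\delta$. Hence $\omega_{\rho,\tau_L\sigma}(\delta)\le\omega_{\rho,\sigma}(\delta)$ for every $\delta>0$. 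Conversely, using the left inequality $\rho(j+1,k+1)\ge\frac{1}{\sqrt{6}}\rho(j,k)$, one gets $\omega_{\rho,\sigma}(\delta)\le\omega_{\rho,\tau_L\sigma}(\sqrt 6\,\delta)$, up to handling the single missing index $0$ separately. So the chain of equivalences for $\SQRTO$ follows immediately from $\omega_{\rho,\sigma}(\delta)\to 0$ being equivalent to the same statement for $\tau_L\sigma$. Boundedness is obviously preserved in both directions.

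First I would record that $\tau_L\sigma$ is bounded whenever $\sigma$ is, with $\|\tau_L\sigma\|_\infty\le\|\sigma\|_\infty$, and that $\sigma=\tau_R(\tau_L\sigma)$ only recovers $\sigma$ from index $1$ on, so for the reverse implication I must separately bound $|\sigma(0)-\sigma(k)|$. But in the $\SQRTO$ direction this is harmless: a finite modification of a sequence does not affect membership in $\SQRTO$, since $\omega_{\rho,\sigma}(\delta)$ for small $\delta$ only ``sees'' pairs of large indices by Lemma~\ref{lem:min_jk_ge_rhojk}. Thus if $\tau_L\sigma\in\SQRTO$, then the tail of $\sigma$ is uniformly continuous, and adding back the single value $\sigma(0)$ keeps it so.

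Next I would carry out the Lipschitz part analogously. If $\sigma\in\Lip$ with Lipschitz constant $L$, then for $j\ne k$,
\[
\frac{|(\tau_L\sigma)(j)-(\tau_L\sigma)(k)|}{\rho(j,k)}
=\frac{|\sigma(j+1)-\sigma(k+1)|}{\rho(j+1,k+1)}\cdot\frac{\rho(j+1,k+1)}{\rho(j,k)}
\le L\cdot 1,
\]
by the right inequality of~\eqref{eq:lemarho-ineq}; so $\tau_L\sigma\in\Lip$. Conversely, if $\tau_L\sigma\in\Lip$ with constant $L$, the same identity together with the left inequality $\rho(j+1,k+1)\ge\frac{1}{\sqrt 6}\rho(j,k)$ gives $|\sigma(j+1)-\sigma(k+1)|\le\sqrt 6\,L\,\rho(j,k)$ for all $j,k$; it remains to bound $|\sigma(0)-\sigma(k)|/\rho(0,k)=|\sigma(0)-\sigma(k)|/\sqrt k$ for $k\ge 1$, which is automatic since $\sigma$ is bounded and $\sqrt k\ge 1$. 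Hence $\sigma\in\Lip$. Alternatively, and perhaps more cleanly, I could invoke Proposition~\ref{Prop:lip-acot} directly: membership in $\Lip$ is equivalent to $\sup_n\sqrt{n+1}\,|\sigma(n+1)-\sigma(n)|<\infty$, and $\sqrt{n+1}\,|(\tau_L\sigma)(n+1)-(\tau_L\sigma)(n)|=\sqrt{n+1}\,|\sigma(n+2)-\sigma(n+1)|\le\sqrt{n+2}\,|\sigma(n+2)-\sigma(n+1)|$, so the supremum for $\tau_L\sigma$ is dominated by that for $\sigma$; the reverse bound is equally direct.

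The main obstacle—such as it is—is purely bookkeeping: the shift loses the index $0$, so one cannot simply say $\sigma=\tau_R\tau_L\sigma$ and quote the $\tau_R$-direction. The resolution, as indicated above, is that finitely many entries are irrelevant both for $\SQRTO$ (by Lemma~\ref{lem:min_jk_ge_rhojk}, small sqrt-distances occur only between large indices) and for $\Lip$ (boundedness absorbs the finitely many affected difference quotients). No genuinely hard estimate is needed beyond Lemma~\ref{lemm:rho-ineq}, which is already proved.
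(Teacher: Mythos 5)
Your proof is correct and takes essentially the same route as the paper's: both obtain the two-sided estimate $\omega_{\rho,\sigma}(\delta/\sqrt{6})\le\omega_{\rho,\tau_{L}\sigma}(\delta)\le\omega_{\rho,\sigma}(\delta)$ from Lemma~\ref{lemm:rho-ineq} and then read off membership in $\SQRTO$ and $\Lip$ from this sandwich together with the norm comparison. Your side remark that the single missing index $0$ is harmless (via Lemma~\ref{lem:min_jk_ge_rhojk}), and your alternative Lipschitz argument via the criterion of Proposition~\ref{Prop:lip-acot}, are both correct and, if anything, a touch more explicit than the paper's account, which silently restricts to small $\delta$ in the middle step of the sandwich.
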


 \begin{proof}
Let $\sigma \colon\No\to\Complex$.
Then, we have the following connection between
the images of $\tau_{L}\sigma$ and  $\sigma$:
\[
\sigma(\No)=\tau_{L}(\No)\cup \{\sigma(0)\}.
\]
Therefore,
\begin{equation}
\label{eq:norm-sigm-tausig}
\|\sigma\|_{\infty}
=\max\left\{\left\|\tau_{L}\sigma\right\|_{\infty},\bigl|\sigma(0)\bigr|\right\}.
\end{equation}
Let $\delta>0$. Then, 
\begin{align*}
\omega_{\rho,\tau_{L}\sigma}(\delta) &= \sup \bigl\{ \big|\sigma(j+1) - \sigma(k+1)\big|\colon j,k \in \No,\ \rho(j,k) \leq \delta \bigl\}.
\end{align*}
By Lemma~\ref{lemm:rho-ineq},
$\rho(j,k) \leq \delta $ implies $\rho(j+1,k+1) \leq \delta$.
Thus, 
\begin{align*}
\omega_{\rho,\tau_{L}\sigma}(\delta)
&\leq
\sup \bigl\{ |\sigma(j+1) - \sigma(k+1)|\colon\ 
j,k \in \No,\ \rho(j+1,k+1) \leq \,\delta \bigl\}
\\
&=
\sup \bigl\{ |\sigma(p) - \sigma(q)|\colon\ 
p,q \in \N,\ \rho(p,q) \leq \,\delta \bigl\}
\\
&\leq
\omega_{\rho,\sigma}(\delta).
\end{align*}
On the other hand, by Lemma~\ref{lemm:rho-ineq},
$\rho(p+1,q+1) \leq \delta$ implies
$\rho(p,q) \leq \sqrt{6}\,\delta$.
Therefore,
\begin{align*}
\omega_{\rho,\sigma}(\delta)
&=\sup \bigl\{ |\sigma(j) - \sigma(k)|\colon\ 
j,k \in \No,\ \rho(j,k) \leq \delta \bigl\}
\\
&\leq
\sup \bigl\{ |\sigma(p+1) - \sigma(q+1)|\colon\ 
p,q \in \No,\ \rho(p+1,q+1) \leq \delta \bigl\}
\\
&\leq
\sup \bigl\{ |\sigma(p+1) - \sigma(q+1)|\colon\ 
p,q \in \No,\ \rho(p,q) \leq \sqrt{6}\,\delta \bigl\}
\\
&=
\omega_{\rho,\tau_{L}\sigma}(\sqrt{6}\,\delta).
\end{align*}
We have proved that
\begin{equation}
\label{eq:ineq-wtaul-wsig}
\omega_{\rho, \sigma}\left(\dfrac{\delta}{\sqrt{6}}\right)
\leq \omega_{\rho, \tau_{L}\sigma}(\delta)
\leq \omega_{\rho, \sigma}(\delta).
\end{equation}
The conclusions of the proposition follow by \eqref{eq:norm-sigm-tausig} and \eqref{eq:ineq-wtaul-wsig}.
We use the criterion~\eqref{eq:Lip_via_omega}
for the Lipschitz continuity.
 \end{proof}
 
\begin{proposition}
\label{prop:sigma_and_tauR_sigma}
Let $\sigma\in\ell_{\infty}(\No)$.
Then, $\sigma \in \SQRTO$ if and only if
$\tau_{R}\sigma \in \SQRTO$.
Moreover,
Then, $\sigma \in \Lip$ if and only if
$\tau_{R}\sigma \in \Lip$.
\end{proposition}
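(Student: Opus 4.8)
The plan is to deduce the statement from Proposition~\ref{prop:sigma-TauL-sqrto} by exploiting the identity $\tau_{L}\tau_{R}\sigma=\sigma$, valid for every sequence $\sigma$, so that $\tau_{R}$ behaves like a one-sided inverse of $\tau_{L}$.

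First I would note that $\tau_{R}$ preserves the sup-norm: the set of values of $\tau_{R}\sigma$ is $\{0\}\cup\sigma(\No)$, hence $\|\tau_{R}\sigma\|_{\infty}=\|\sigma\|_{\infty}$, and in particular $\tau_{R}\sigma\in\ell_{\infty}(\No)$ iff $\sigma\in\ell_{\infty}(\No)$. The next and main step is to apply the double inequality \eqref{eq:ineq-wtaul-wsig}, which Proposition~\ref{prop:sigma-TauL-sqrto} establishes for an arbitrary sequence, with $\tau_{R}\sigma$ in place of $\sigma$. Since $\tau_{L}(\tau_{R}\sigma)=\sigma$, this gives, for every $\delta>0$,
\[
\omega_{\rho,\tau_{R}\sigma}\!\left(\frac{\delta}{\sqrt{6}}\right)
\le
\omega_{\rho,\sigma}(\delta)
\le
\omega_{\rho,\tau_{R}\sigma}(\delta);
\]
equivalently, after rescaling $\delta$, one has $\omega_{\rho,\sigma}(\delta)\le\omega_{\rho,\tau_{R}\sigma}(\delta)\le\omega_{\rho,\sigma}(\sqrt{6}\,\delta)$.

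From this two-sided comparison both equivalences follow at once. For the $\SQRTO$ part: the inequality $\omega_{\rho,\sigma}(\delta)\le\omega_{\rho,\tau_{R}\sigma}(\delta)$ shows that $\tau_{R}\sigma\in\SQRTO$ implies $\sigma\in\SQRTO$, while $\omega_{\rho,\tau_{R}\sigma}(\delta/\sqrt{6})\le\omega_{\rho,\sigma}(\delta)$ gives the converse, because $\delta\mapsto\delta/\sqrt{6}$ is a bijection of $(0,+\infty)$, so $\omega_{\rho,\sigma}(\delta)\to0$ forces $\omega_{\rho,\tau_{R}\sigma}(\eps)\to0$. For the $\Lip$ part I would invoke the characterization \eqref{eq:Lip_via_omega}: a bound $\omega_{\rho,\sigma}(\delta)\le L\delta$ for all $\delta$ yields $\omega_{\rho,\tau_{R}\sigma}(\delta)\le\omega_{\rho,\sigma}(\sqrt{6}\,\delta)\le\sqrt{6}\,L\,\delta$, and conversely a Lipschitz bound for $\tau_{R}\sigma$ transfers back to $\sigma$ via $\omega_{\rho,\sigma}(\delta)\le\omega_{\rho,\tau_{R}\sigma}(\delta)$.

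I do not expect a genuine obstacle here; the only point requiring a moment's care is the newly inserted zeroth entry $(\tau_{R}\sigma)(0)=0$, which could a priori spoil the modulus of continuity of $\tau_{R}\sigma$. Routing the argument through $\tau_{L}\tau_{R}=\mathrm{id}$ makes this harmless automatically. If instead one preferred a direct estimate of $\omega_{\rho,\tau_{R}\sigma}(\delta)$, one would split a pair of indices into the case where both are positive (reducing to $\omega_{\rho,\sigma}$ by Lemma~\ref{lemm:rho-ineq}) and the case where one equals $0$; the latter is vacuous as soon as $\delta<1$, since $\rho(j,0)=\sqrt{j}\ge1$ for every $j\ge1$.
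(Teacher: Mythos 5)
Your route is correct and is actually tighter than the paper's. The paper uses only the right half of \eqref{eq:ineq-wtaul-wsig}, applied to $\tau_R\sigma$, to get $\omega_{\rho,\sigma}(\delta)\le\omega_{\rho,\tau_R\sigma}(\delta)$; for the reverse comparison it does not invoke \eqref{eq:ineq-wtaul-wsig} again, but instead estimates $\omega_{\rho,\tau_R\sigma}(\delta)$ directly via Lemma~\ref{lem:min_jk_ge_rhojk} (to force $j,k\ge1$) and Lemma~\ref{lemm:rho-ineq}, restricting to $0<\delta<1/3$ and separately supplying the trivial bound $\omega_{\rho,\tau_R\sigma}(\delta)\le2\|\sigma\|_\infty\le6\|\sigma\|_\infty\delta$ for $\delta\ge1/3$. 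You obtain both inequalities at once from the two-sided \eqref{eq:ineq-wtaul-wsig} applied to $\tau_R\sigma$, which saves the extra lemma and the case split; that is a genuine simplification of the argument as the paper writes it.

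One caveat is worth flagging, although it originates in the paper rather than in your reasoning. The left half of \eqref{eq:ineq-wtaul-wsig} is not actually valid for all $\delta>0$: the step in the paper's proof that replaces the supremum over $j,k\in\No$ by the supremum over $j,k\in\N$ is only legitimate when $\delta<1$, since $\rho(0,k)=\sqrt{k}\ge1$ for $k\ge1$ and hence pairs involving the index $0$ are excluded only for small $\delta$. Consequently your derived bound $\omega_{\rho,\tau_R\sigma}(\delta)\le\omega_{\rho,\sigma}(\sqrt{6}\,\delta)$ fails for $\delta\ge1$: take $\sigma$ a nonzero constant, so that $\omega_{\rho,\sigma}\equiv0$ while $\omega_{\rho,\tau_R\sigma}(1)\ge|\sigma(0)|>0$ because $\rho(0,1)=1$. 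This does not damage either conclusion, since for $\delta\ge1$ one has $\omega_{\rho,\tau_R\sigma}(\delta)\le2\|\sigma\|_\infty\le2\|\sigma\|_\infty\,\delta$, which is exactly the patch the paper uses; you effectively anticipate this in your closing remark about the vacuity of the index-$0$ case for $\delta<1$, but the step ``$\omega_{\rho,\sigma}(\delta)\le L\delta$ for all $\delta$ yields $\omega_{\rho,\tau_R\sigma}(\delta)\le\sqrt{6}L\delta$ for all $\delta$'' as written silently relies on the unrestricted inequality and should be completed with the trivial bound for $\delta\ge1$.
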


\begin{proof}
Let $\sigma\colon\No\to\Complex$.
The sequences $\sigma$ and $\tau_{R}\sigma$ have the same image up to the number zero:
\[
(\tau_R \sigma)(\No) = \sigma(\No)\cup\{0\}.
\]
Therefore,
\begin{equation}
\label{eq:norm_tau_R_sigma_and_sigma}
\|\tau_{R}\sigma\|_{\infty}=\|\sigma\|_{\infty}.
\end{equation}
Let $\delta>0$.
Applying~\eqref{eq:ineq-wtaul-wsig}
with $\tau_R \sigma$ instead of $\sigma$
and using the formula $\sigma=\tau_L(\tau_R \sigma)$,
we get
\[
\omega_{\rho,\sigma}(\delta)
\le
\omega_{\rho, \tau_R \sigma}(\delta).
\]
On the other hand,
let $0<\delta<1/3$
and $j,k\in\No$ such that $j\ne k$
and $\rho(j,k)\le\delta$.
Then, $j,k\ge 1$ by Lemma~\ref{lem:min_jk_ge_rhojk}
and $\rho(j-1,k-1)\le \sqrt{6}\,\rho(j,k)$
by Lemma~\ref{lemm:rho-ineq}.
Hence,
\begin{align*}
\omega_{\rho,\tau_R \sigma}(\delta)
&\le
\sup \bigl\{ |\sigma(j-1) - \sigma(k-1)|\colon\ 
j,k \in \N,\ \rho(j,k) \leq \delta \bigl\}
\\
&\le
\sup \bigl\{ |\sigma(j-1) - \sigma(k-1)|\colon\ 
j,k \in \N,\ \rho(j-1,k-1) \leq \sqrt{6}\,\delta \bigl\}
\\
&\le
\sup \bigl\{ |\sigma(p) - \sigma(q)|\colon\ 
p,q \in \No,\ \rho(p,q) \leq \delta \bigl\}
=\omega_{\rho,\sigma}(\delta).
\end{align*}
For $\delta\ge1/3$, we have the trivial inequality
\[
\omega_{\rho,\tau_R \sigma}(\delta)
\le 2\|\sigma\|_\infty
\le 6\|\sigma\|_\infty\,\delta.
\]
So, for every $\delta>0$,
\begin{equation}
\label{eq:omega_tau_R_sigma_and_omega_sigma}
\omega_{\rho,\sigma}(\delta)
\le
\omega_{\rho, \tau_R \sigma}(\delta)
\le
\max\bigl\{\omega_{\rho,\sigma}(\delta),
6\|\sigma\|_\infty\,\delta\bigr\}.
\end{equation}
The conclusions of the proposition
follow from
\eqref{eq:norm_tau_R_sigma_and_sigma}
and 
\eqref{eq:omega_tau_R_sigma_and_omega_sigma}.
\end{proof}

The following proposition shows a curious property of the sequences in $\SQRTO$: the difference of such sequence $\sigma$ and its shifted version $\tau_{L}^{k}\sigma$ is a zero convergent sequence.

\begin{proposition}
\label{eq:sequence_minus_its_shift}
    Let $\sigma\in\SQRTO$ and $k\in\N$. Then $\sigma-\tau_{L}^{k}\sigma\in c_{0}(\No)$.
\end{proposition}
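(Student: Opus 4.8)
The plan is to show that for $\sigma\in\SQRTO$ and fixed $k\in\N$, the sequence $n\mapsto \sigma(n+k)-\sigma(n)$ tends to $0$ as $n\to\infty$ (for $n$ large enough, $(\tau_L^k\sigma)(n)=\sigma(n+k)$, so this is exactly $\sigma-\tau_L^k\sigma$). First I would fix $\eps>0$ and use the uniform continuity of $\sigma$ with respect to $\rho$: choose $\delta>0$ with $\omega_{\rho,\sigma}(\delta)<\eps$. The key elementary estimate is that the sqrt-distance between $n$ and $n+k$ shrinks as $n$ grows; indeed,
\[
\rho(n,n+k)=\sqrt{n+k}-\sqrt{n}=\frac{k}{\sqrt{n+k}+\sqrt{n}}\le\frac{k}{2\sqrt{n}}.
\]
Hence, as soon as $n\ge k^2/(4\delta^2)$, we have $\rho(n,n+k)\le\delta$, which gives $|\sigma(n+k)-\sigma(n)|\le\omega_{\rho,\sigma}(\delta)<\eps$. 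Since $\eps>0$ was arbitrary, $\lim_{n\to\infty}\bigl(\sigma(n+k)-\sigma(n)\bigr)=0$, i.e. $\sigma-\tau_L^k\sigma\in c_0(\No)$.

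To keep the write-up tidy, I would note at the start that $(\tau_L^k\sigma)(j)=\sigma(j+k)$ for all $j\in\No$ (this follows immediately by induction from the definition of $\tau_L$), so that the sequence under study is precisely $j\mapsto\sigma(j)-\sigma(j+k)$. One could alternatively phrase the bound using Lemma~\ref{lem:min_jk_ge_rhojk} in the reverse direction, but the direct computation of $\rho(n,n+k)$ above is cleaner and entirely self-contained. The boundedness of $\sigma-\tau_L^k\sigma$ is clear since $\sigma$ is bounded, so the only content is the convergence to $0$.

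I do not expect any real obstacle here: the whole argument is a two-line estimate combined with the definition of $\SQRTO$. The one point worth stating carefully is the passage from "the modulus of continuity at some $\delta$ is small" to a genuine $\eps$–$N$ statement, which is already spelled out in the remark following the definition of $\omega_{\rho,\sigma}$; so I would simply invoke that. If one wanted an even more robust formulation, the same proof shows more generally that $\sigma(n+k_n)-\sigma(n)\to0$ for any bounded sequence $(k_n)$, but the stated version with constant $k$ is all that is needed.
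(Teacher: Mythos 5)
Your argument is correct and is essentially identical to the paper's: both bound $|\sigma(n)-\sigma(n+k)|$ by $\omega_{\rho,\sigma}(\rho(n,n+k))$, compute $\rho(n,n+k)=k/(\sqrt{n+k}+\sqrt{n})\to 0$, and conclude via uniform continuity. The paper states this in one line; you just unpack the $\varepsilon$--$\delta$ bookkeeping.
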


\begin{proof}
Indeed,
\begin{align*}\left|\sigma(n)-(\tau_{L}^{k}\sigma)(n)\right|=\left|\sigma(n)-\sigma(n+k)\right|\leq \omega_{\rho,\sigma}\left(\dfrac{k}{\sqrt{n+k}+\sqrt{n}}\right),
\end{align*}
and the last expression converges to $0$ as $n$ tends to $\infty$.
\end{proof}


\section{Shifts and eigenvalue sequences of radial Toeplitz operators}
\label{sec:gammas_belong_to_RO}
Given  $S\subseteq L_{\infty}(\RPlus) $, we denote by $\gamma(S)$ the set of the corresponding eigenvalue sequences:
\begin{equation}
    \gamma(S)\eqdef\{\gamma_{g}\colon g\in S\}.
\end{equation}
We use this notation for $S=L_{\infty}(\RPlus)$, $S=C_{0}(\RPlus)$, etc.

In \cite{Esmeral-Maximenko-2016},
we showed some properties of sequences in the class $\gamma(L_{\infty}(\RPlus))$.
In particular, we showed that these sequences are Lipschitz continuous
with respect to $\rho$
\cite[Proposition 4.4]{Esmeral-Maximenko-2016},
and form a dense subspace of $\SQRTO$,
\cite[Theorem 1.1]{Esmeral-Maximenko-2016}.
Next, we show that this set is invariant under the left translations.
Our main tool is the following sequence
$(\B_j)_{j\in\No}$
of \emph{averaging operators},
introduced by Grudsky and Vasilevski in~\cite{Grudsky-Vasilevski}.
Define $\B_0\colon L_\infty(\RPlus)\to L_\infty(\RPlus)$ by
\begin{equation}
\label{eq:B0_def}
(\B_0 g)(r)\eqdef g(\sqrt{r})
\qquad(g\in L_\infty(\RPlus),\ r\geq0).
\end{equation}
For every $j$ in $\N$, define
$\B_j\colon L_\infty(\RPlus)\to L_\infty(\RPlus)$ by
\begin{equation}
\label{promedio-Bj}
(\B_j g)(r)
\eqdef
\int_{r}^{+\infty}
(\B_{j-1}g)(u)\enumber{r-u}\,\dd u
\qquad(g\in L_\infty(\RPlus),\ r\geq0).
\end{equation}
In~\cite{Grudsky-Vasilevski},
these operators are defined on a wider function class,
but here we restrict them to $L_\infty(\RPlus)$.
Since $\int_r^{+\infty}\enumber{r-u}\,\dd u=1$,
the values of $\B_j$ belong to $L_\infty(\RPlus)$.


\begin{lemma}
\label{lem:TBj}
For every $j$ in $\No$
and every $g$ in $L_\infty(\RPlus)$,
\begin{equation}
\label{eq:tauLjga=gabj}
\tau_{L}^{j}\gamma_{g}
= 
\gamma_{(\B_{j}g)\circ\,\operatorname{sq}},
\end{equation}
where
$\operatorname{sq}\colon\RPlus\to\RPlus$
is defined by
$\operatorname{sq}(x)\eqdef x^{2}$.
\end{lemma}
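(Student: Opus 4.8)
The plan is to prove the identity \eqref{eq:tauLjga=gabj} by induction on $j$, with the base case $j=0$ following from the definition of $\B_0$ in \eqref{eq:B0_def}: since $(\B_0 g)(r)=g(\sqrt r)$ we have $(\B_0 g)\circ\operatorname{sq}=g$, and $\tau_L^0\gamma_g=\gamma_g$, so both sides coincide. For the inductive step it suffices to prove the single-shift relation
\[
\tau_L\gamma_h=\gamma_{(\B_1 h)\circ\operatorname{sq}}
\qquad(h\in L_\infty(\RPlus)),
\]
because then, assuming \eqref{eq:tauLjga=gabj} for $j$, applying $\tau_L$ once more and using that $\tau_L\gamma_g=\gamma_{(\B_1 g)\circ\operatorname{sq}}$ with $g$ replaced by $(\B_j g)\circ\operatorname{sq}$ will give the desired formula for $j+1$ — here one must also check that $\B_1\bigl((\B_j g)\circ\operatorname{sq}\bigr)=\B_{j+1}g$ after composing with $\operatorname{sq}$, which amounts to unwinding the definition \eqref{promedio-Bj} together with the change of variables $r=x^2$ built into $\B_0$.

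The heart of the matter is therefore the single-shift computation. Starting from \eqref{gamma-radial-fock}, write
\[
(\tau_L\gamma_h)(n)=\gamma_h(n+1)=\frac{1}{(n+1)!}\int_0^{+\infty}h(\sqrt r)\,\enumber{-r}r^{n+1}\,\dd r .
\]
I would first absorb one factor of $r$ and the extra $(n+1)$ in the denominator by integration by parts, or equivalently recognize that $\frac{r^{n+1}}{(n+1)!}\enumber{-r}=-\frac{\dd}{\dd r}\!\int_r^\infty\frac{u^{n+1}}{(n+1)!}\enumber{-u}\,\dd u$ is not quite what is needed; the cleaner route is to use the identity
\[
\frac{r^{n+1}}{(n+1)!}\,\enumber{-r}
=\int_r^{+\infty}\frac{u^{n}}{n!}\,\enumber{-u}\,\dd u\quad\text{is false},
\]
so instead I would integrate by parts in the $u$-variable after inserting $\B_0 h$. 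Concretely, substitute $r=x^2$ nowhere yet; rather, use Fubini: since $\enumber{-r}r^{n+1}/(n+1)!$ can be written via the Gamma integral manipulation, the key elementary fact is
\[
\frac{r^{n+1}}{(n+1)!}\enumber{-r}
=\frac1{n!}\int_0^{r}\!\!\cdots
\]
— let me state the fact I actually need: for $r\ge 0$,
\[
\int_r^{+\infty}\Bigl(\int_s^{+\infty}(\B_0 h)(u)\,\enumber{s-u}\,\dd u\Bigr)\frac{\enumber{-r}\,?}{}\,.
\]

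Let me restate the plan without the false turns. The clean mechanism is: define $(\B_1 h)(s)=\int_s^\infty (\B_0 h)(u)\enumber{s-u}\dd u$; then by Fubini,
\[
\int_0^{+\infty}(\B_1 h)(s)\,\enumber{-s}\,s^{n}\,\dd s
=\int_0^{+\infty}(\B_0 h)(u)\,\enumber{-u}\Bigl(\int_0^{u}\enumber{s}\,\enumber{-s}\,s^{n}\,\dd s\Bigr)\dd u
=\frac1{n+1}\int_0^{+\infty}(\B_0 h)(u)\,\enumber{-u}\,u^{n+1}\,\dd u,
\]
since $\int_0^u s^n\,\dd s=u^{n+1}/(n+1)$. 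Dividing both sides by $n!$ and recalling $(\B_0 h)(u)=h(\sqrt u)$ and $((\B_1 h)\circ\operatorname{sq})(x)=(\B_1 h)(x^2)$, the right-hand side becomes $\gamma_h(n+1)=(\tau_L\gamma_h)(n)$ and the left-hand side becomes $\gamma_{(\B_1 h)\circ\operatorname{sq}}(n)$ after the same substitution $s=x^2$ is undone — that is, $\gamma_{(\B_1 h)\circ\operatorname{sq}}(n)=\frac1{n!}\int_0^\infty (\B_1 h)(s)\enumber{-s}s^n\,\dd s$ directly by \eqref{gamma-radial-fock}. This proves the single-shift relation, and the induction then closes once one verifies the semigroup-type bookkeeping $(\B_1\bigl((\B_j g)\circ\operatorname{sq}\bigr))\circ\operatorname{sq}=(\B_{j+1}g)\circ\operatorname{sq}$, which follows because $\B_0\bigl((\B_j g)\circ\operatorname{sq}\bigr)=\B_j g$ (the composition with $\operatorname{sq}$ and then with $\B_0$'s square root cancel), so $\B_1\bigl((\B_j g)\circ\operatorname{sq}\bigr)=\B_{j+1}g$ by \eqref{promedio-Bj}. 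The only genuine obstacle is justifying the Fubini interchange and the boundedness of all intermediate symbols, but this is immediate from $\|h\|_\infty<\infty$ and the fact noted after \eqref{promedio-Bj} that $\int_s^\infty\enumber{s-u}\dd u=1$, so every $\B_j h\in L_\infty(\RPlus)$ and all integrals converge absolutely.
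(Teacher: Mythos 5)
Your final argument is correct, but it takes a genuinely different route from the paper. The paper also sets up an induction on $j$, but the inductive step is carried out by a single integration by parts: defining $f(r)=(\B_j g)(r)\enumber{-r}$, one checks $f'(r)=-(\B_{j-1}g)(r)\enumber{-r}$, and integrating $\int_0^{\infty}f'(r)r^{n+1}\,\dd r$ by parts converts $\frac{1}{(n+1)!}\int_0^{\infty}(\B_{j-1}g)(r)\enumber{-r}r^{n+1}\,\dd r$ into $\frac{1}{n!}\int_0^{\infty}(\B_{j}g)(r)\enumber{-r}r^{n}\,\dd r$. You instead reduce to the single-shift case and prove it via Fubini: after inserting the definition of $\B_1$ and noting $\enumber{s-u}\enumber{-s}=\enumber{-u}$, the inner integral $\int_0^u s^n\,\dd s=u^{n+1}/(n+1)$ produces the needed factorial shift. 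The two manipulations are dual to each other; Fubini is arguably a touch more elementary since it avoids appealing to a.e.\ differentiability and the fundamental theorem of calculus for Lebesgue integrals, and absolute convergence is immediate from $\|h\|_\infty<\infty$ and $\int_s^\infty\enumber{s-u}\,\dd u=1$. Your ``semigroup bookkeeping'' $\B_1\bigl((\B_j g)\circ\operatorname{sq}\bigr)=\B_{j+1}g$ is correct and does close the induction. One presentational remark: the middle of your write-up contains several abandoned false starts that you explicitly flag as wrong before restarting; in a final version these should simply be deleted, as the clean Fubini computation you settle on is self-contained.
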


\begin{proof}
Obviously, \eqref{eq:tauLjga=gabj} holds for $j=0$.
Supposing that $j\in\N$ and~\eqref{eq:tauLjga=gabj} holds for $j-1$ instead of $j$, let us prove it for $j$.
Let $g\in L_\infty(\RPlus)$ and $n\in\No$.
By the induction hypothesis,
~\eqref{gamma-radial-fock} and~\eqref{eq:tauLjga=gabj},
\begin{align*}
(\tau_{L}^{j}\gamma_{g})(n)
&=
(\tau_L^{j-1}\gamma_g)(n+1)
=
\gamma_{(\B_{j-1}g)\circ\,\operatorname{sq}}(n+1)
\\[0.5ex]
&=
\frac{1}{(n+1)!}\int_{0}^{+\infty}
(\B_{j-1} g)(r)\,\enumber{-r}r^{n+1}\,\dd r.
\end{align*}
Using the fundamental theorem of calculus
(in the context of Lebesgue integrals),
it is easy to see that for $j\ge 1$,
the function 
$f(r)\eqdef (\B_j g)(r) \enumber{-r}$
is differentiable almost everywhere in $\RPlus$, and
\begin{equation}
\label{eq:deriv_Bj}
f'(r)
= -(\B_{j-1} g)(r) \enumber{-r}.
\end{equation}
Using~\eqref{eq:deriv_Bj} and integrating by parts, we get
\begin{align*}
(\tau_{L}^{j}\gamma_{g})(n)
&=
-\frac{1}{(n+1)!}\int_{0}^{+\infty}
f'(r) r^{n+1}\,\dd r
=\frac{1}{n!} \int_{0}^{+\infty}
f(r) r^n\,\dd r
\\[0.5ex]
&=
\frac{1}{n!} \int_{0}^{+\infty}
(\B_j g)(r) r^n \enumber{-r}\,\dd r
=
(\gamma_{(\B_{j}g)\circ\,\operatorname{sq}})(n).
\qedhere
\end{align*}
\end{proof}

\begin{proposition}
\label{prop:gamma-inv-tl}
$\gamma(L_{\infty}(\RPlus))$
is an invariant subspace of $\tau_{L}$.
\end{proposition}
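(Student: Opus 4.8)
The plan is to read this off directly from Lemma~\ref{lem:TBj}. First I would note that $\gamma(L_{\infty}(\RPlus))$ is a linear subspace of $\ell_{\infty}(\No)$: the assignment $g\mapsto\gamma_{g}$ given by~\eqref{gamma-radial-fock} is linear in $g$, and each resulting sequence is bounded since $|\gamma_{g}(n)|\le\|g\|_{\infty}$ (the weight $\frac{1}{n!}\,\enumber{-r}r^{n}$ has integral $1$ over $\RPlus$). Hence only the invariance $\tau_{L}\,\gamma(L_{\infty}(\RPlus))\subseteq\gamma(L_{\infty}(\RPlus))$ needs to be checked.

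For that, I would apply Lemma~\ref{lem:TBj} with $j=1$, which yields $\tau_{L}\gamma_{g}=\gamma_{(\B_{1}g)\circ\operatorname{sq}}$ for every $g\in L_{\infty}(\RPlus)$. It remains to confirm that the new defining symbol $(\B_{1}g)\circ\operatorname{sq}$ is again essentially bounded on $\RPlus$. This is where the only (minor) care is needed: $\operatorname{sq}$ is a bijection of $(0,+\infty)$ onto itself whose inverse $x\mapsto\sqrt{x}$ is locally Lipschitz, so $\operatorname{sq}$ maps Lebesgue-null sets to null sets and pulls them back to null sets; therefore composition with $\operatorname{sq}$ descends to a well-defined map on $L_{\infty}$, and $\|(\B_{1}g)\circ\operatorname{sq}\|_{\infty}\le\|\B_{1}g\|_{\infty}\le\|g\|_{\infty}$ (recall from the construction of the $\B_{j}$ that $\B_{1}g\in L_{\infty}(\RPlus)$). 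Thus $\tau_{L}\gamma_{g}\in\gamma(L_{\infty}(\RPlus))$, which is exactly the asserted invariance.

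I do not anticipate a genuine obstacle: the real content is already contained in Lemma~\ref{lem:TBj}, and the argument above is just a one-line application of its $j=1$ case together with the bookkeeping that we stay inside $L_{\infty}(\RPlus)$. If one prefers, the same reasoning with the general-$j$ form of Lemma~\ref{lem:TBj} shows at once that $\tau_{L}^{k}\,\gamma(L_{\infty}(\RPlus))\subseteq\gamma(L_{\infty}(\RPlus))$ for all $k\in\No$, so that $\gamma(L_{\infty}(\RPlus))$ is invariant under every power of the left shift.
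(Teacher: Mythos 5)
Your proof is correct and is essentially the paper's own proof: the paper's argument for this proposition is the single line ``Follows from Lemma~\ref{lem:TBj},'' which is exactly the $j=1$ application you carry out. The extra bookkeeping you include (linearity and boundedness of $\gamma$, and that $(\B_{1}g)\circ\operatorname{sq}$ stays in $L_{\infty}(\RPlus)$) is already built into the definitions and into the remark in the paper that the $\B_{j}$ map $L_\infty(\RPlus)$ into itself, so you have just spelled out the same route in more detail.
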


\begin{proof}
Follows from Lemma~\ref{lem:TBj}.
\end{proof}

\begin{proposition}\label{Prop:taulNsig-gamma}
Let $\sigma\in\SQRTO$. Then, for any $\eps>0$,
there exist $N$ in $\N$ and $g$ in $L_{\infty}(\RPlus)$ such that
\[
\bigl\|\tau_{L}^{N}\sigma-\gamma_{g}\bigr\|<\eps.
\]
\end{proposition}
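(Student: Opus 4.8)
The plan is to reduce Proposition~\ref{Prop:taulNsig-gamma} to the known density result $\overline{\gamma(L_{\infty}(\RPlus))}=\SQRTO$ from \cite{Esmeral-Maximenko-2016} together with the shift-invariance established so far. First I would use Proposition~\ref{prop:sigma-TauL-sqrto} (applied $N$ times, or rather the equivalence $\sigma\in\SQRTO\iff\tau_L\sigma\in\SQRTO$ iterated) to note that $\tau_L^{N}\sigma\in\SQRTO$ for every $N\in\N$; in particular $\tau_L^{N}\sigma$ can itself be approximated to within $\eps$ by some $\gamma_g$ with $g\in L_\infty(\RPlus)$. That already gives the statement directly, with \emph{any} $N\in\N$, since the density theorem says: for $\tau_L^{N}\sigma\in\SQRTO$ and $\eps>0$ there is $g\in L_\infty(\RPlus)$ with $\|\tau_L^{N}\sigma-\gamma_g\|<\eps$. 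So the essential content reduces to: (i) $\SQRTO$ is $\tau_L$-invariant, which is Proposition~\ref{prop:sigma-TauL-sqrto}, and (ii) $\gamma(L_\infty(\RPlus))$ is dense in $\SQRTO$, which is \cite[Theorem 1.1]{Esmeral-Maximenko-2016} and is quoted in Section~\ref{sec:gammas_belong_to_RO}.

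If instead the intent is to build the approximant for $\tau_L^N\sigma$ out of an approximant for $\sigma$ (which is presumably why the proposition is phrased with an unspecified $N$ rather than a fixed one), then the route I would take is: pick $g_0\in L_\infty(\RPlus)$ with $\|\sigma-\gamma_{g_0}\|<\eps$ by density; then observe that $\|\tau_L^{N}\sigma-\tau_L^{N}\gamma_{g_0}\|\le\|\sigma-\gamma_{g_0}\|<\eps$ since $\tau_L$ is a contraction on $\ell_\infty$; and finally invoke Lemma~\ref{lem:TBj} to identify $\tau_L^{N}\gamma_{g_0}=\gamma_{(\B_N g_0)\circ\operatorname{sq}}$, so we may take $g\eqdef(\B_N g_0)\circ\operatorname{sq}\in L_\infty(\RPlus)$, which is well defined and essentially bounded because each $\B_j$ maps $L_\infty(\RPlus)$ into itself (noted after~\eqref{promedio-Bj}) and precomposition with $\operatorname{sq}$ preserves $L_\infty$. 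This works for every $N\in\N$, so in particular for $N=1$, and the choice of $N$ is immaterial.

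The key steps in order: (1) observe $\tau_L^{N}\sigma\in\SQRTO$ via Proposition~\ref{prop:sigma-TauL-sqrto}; (2) apply the density of $\gamma(L_\infty(\RPlus))$ in $\SQRTO$ to $\tau_L^{N}\sigma$ to obtain $g$ with $\|\tau_L^{N}\sigma-\gamma_g\|<\eps$; alternatively, (2$'$) approximate $\sigma$ itself, apply $\tau_L^N$, and use Lemma~\ref{lem:TBj} to re-express the shifted sequence as a $\gamma$-sequence with symbol $(\B_N g_0)\circ\operatorname{sq}$. Either path is short. There is no real obstacle here: the proposition is essentially a corollary of the two already-cited facts (shift-invariance of $\SQRTO$ and density of $\gamma(L_\infty(\RPlus))$), with Lemma~\ref{lem:TBj} supplying the explicit symbol if one wants a constructive form rather than an abstract one. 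If anything, the only point to be slightly careful about is to state clearly that $N$ is not required to be large — any $N\in\N$ works — and that the contraction property $\|\tau_L\|_{\ell_\infty\to\ell_\infty}\le 1$ is what lets the approximation error survive the shift in the second approach.
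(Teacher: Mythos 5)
Your argument is logically valid as a standalone claim, but it rests on the full density result $\overline{\gamma(L_\infty(\RPlus))}=\SQRTO$, and that choice would make the paper's structure circular. The final Remark of the paper uses Proposition~\ref{Prop:taulNsig-gamma} (together with Proposition~\ref{eq:sequence_minus_its_shift} and Theorem~\ref{thm:gamma-dense-c0}) precisely in order to give a new, constructive proof of $\overline{\gamma(L_\infty(\RPlus))}=\SQRTO$. If the proof of Proposition~\ref{Prop:taulNsig-gamma} invokes that same density theorem, the remark proves nothing.

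The paper's actual proof avoids this by using only the \emph{constructive half} of the 2016 argument, namely \cite[Proposition 6.8]{Esmeral-Maximenko-2016}: given $\sigma\in\SQRTO$ and $\eps>0$, there exist $h\in L_\infty(\RPlus)$ and $N\in\N$ with
\[
\sup_{n\ge N}\bigl|\sigma(n)-\gamma_h(n)\bigr|<\eps .
\]
This is a \emph{tail} estimate: the approximation is only guaranteed for indices $n\ge N$, and $N$ depends on $\sigma$ and $\eps$. The proof then sets $g\eqdef(\B_N h)\circ\operatorname{sq}$, so that $\gamma_g=\tau_L^N\gamma_h$ by Lemma~\ref{lem:TBj}, and observes that shifting both sequences by $N$ discards exactly the indices where no control is available:
\[
\bigl\|\tau_L^N\sigma-\gamma_g\bigr\|_\infty
=\bigl\|\tau_L^N\sigma-\tau_L^N\gamma_h\bigr\|_\infty
=\sup_{n\ge N}\bigl|\sigma(n)-\gamma_h(n)\bigr|<\eps.
\]
So the $N$ in the statement is genuinely determined by the tail estimate; it is emphatically \emph{not} immaterial, contrary to what you claim. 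The very reason the proposition is stated with an unspecified $N$ is to absorb the loss of control on the initial segment $\{0,\dots,N-1\}$, which the weaker (but constructive) Proposition~6.8 cannot handle. Your second route (approximate $\sigma$ itself, then shift via Lemma~\ref{lem:TBj}) has the right mechanics but starts from too strong a hypothesis; replacing the full density statement there by the tail estimate from \cite[Proposition 6.8]{Esmeral-Maximenko-2016} is exactly the paper's proof.
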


\begin{proof}
  Let $\sigma\in\SQRTO$.  Given any $\varepsilon>0$, by \cite[Proposition 6.8]{Esmeral-Maximenko-2016},
 there exist
$h\in\,L_{\infty}\big(\RPlus\big)$ and $N\in\N$ such that 
\begin{equation}\label{eq:aux-sigma-gamma-h}
\displaystyle\sup_{n\geq N}\big|\sigma(n)-\gamma_{h}(n)\big|<\varepsilon.
\end{equation} 
Let $g=\big(\mathcal{B}_{N}h\big)\circ \operatorname{sq}$. Then $g$ belongs to $L_{\infty}(\RPlus)$ and $\gamma_{g}=\tau_{L}^{N}\gamma_{h}$  by Lemma \ref{lem:TBj}. Therefore,  by \eqref{eq:aux-sigma-gamma-h} we have
\begin{align*}\label{eq:n>Nsupsigma}
\big\|\tau_{L}^{N}\sigma-\gamma_{g}\big\|_{\infty}&=\big\|\tau_{L}^{N}\sigma-\tau_{L}^{N}\gamma_{h}\big\|_{\infty}=\sup_{m\in\No}\big|\sigma(N+m)-\gamma_{h}(m+N)\big|\\
&=\sup_{n\geq N}\big|\sigma(n)-\gamma_{h}(n)\big|<\varepsilon.\qedhere
\end{align*}
\end{proof}

The following Proposition is inspired by~\cite[Theorem~3.1]{Grudsky-Vasilevski-2}.

\begin{proposition}
\label{prop:gamma_which_vanishes_from_certain_point}
Let $g\in\,L_{\infty}(\RPlus)$ and $p\in\No$
such that $\gamma_{g}(n)=0$ for all $n\in\No$ with $n\ge p$.
Then, $g(x)=0$ a.e. in $\RPlus$. 
\end{proposition}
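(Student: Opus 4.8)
The plan is to reduce the hypothesis to the vanishing of all moments of a single integrable function with exponential decay, and then to force that function to be zero by invoking the injectivity of the Fourier transform. First, I would put $\psi(r)\eqdef g(\sqrt r)\,r^{p}\,\enumber{-r}$ for $r>0$. Since $g\in L_{\infty}(\RPlus)$, we have $|\psi(r)|\le\|g\|_{\infty}\,r^{p}\enumber{-r}$, so $\psi\in L_{1}(\RPlus)$, with $\int_{0}^{+\infty}r^{p}\enumber{-r}\,\dd r=p!$. Writing the index $n\ge p$ in the hypothesis as $n=p+m$ with $m\in\No$ and recalling~\eqref{gamma-radial-fock}, the assumption that $\gamma_{g}(n)=0$ for all $n\ge p$ becomes
\[
\int_{0}^{+\infty}\psi(r)\,r^{m}\,\dd r=0\qquad(m\in\No),
\]
i.e.\ every moment of $\psi$ vanishes. (One may also reach this point through Lemma~\ref{lem:TBj}: the hypothesis says $\tau_{L}^{p}\gamma_{g}=0$, hence $\gamma_{(\B_{p}g)\circ\operatorname{sq}}=0$, so all moments of $(\B_{p}g)(r)\enumber{-r}$ are zero; then one argues as below with this function in place of $\psi$ and uses the injectivity of $\B_{p}$.)

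Next, I would consider the transform $F(z)\eqdef\int_{0}^{+\infty}\psi(r)\,\enumber{zr}\,\dd r$ on the half-plane $\Omega\eqdef\{z\in\Complex\colon\RE z<1\}$. The integral converges absolutely on $\Omega$, because $|\psi(r)\enumber{zr}|\le\|g\|_{\infty}\,r^{p}\enumber{(\RE z-1)r}$; moreover, by Morera's theorem together with dominated convergence, $F$ is holomorphic on the open, connected set $\Omega$. Differentiation under the integral sign at $z=0$ is legitimate, since $|r^{m}\psi(r)\enumber{zr}|\le\|g\|_{\infty}\,r^{m+p}\enumber{-r/2}$ whenever $\RE z\le 1/2$; it yields $F^{(m)}(0)=\int_{0}^{+\infty}\psi(r)\,r^{m}\,\dd r=0$ for all $m\in\No$, by the previous step. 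Thus the Taylor series of $F$ at $0$ vanishes identically, and the identity theorem gives $F\equiv 0$ on $\Omega$.

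Finally, for every real $\xi$ the point $z=-\mathrm{i}\xi$ lies in $\Omega$, so $\int_{0}^{+\infty}\psi(r)\,\enumber{-\mathrm{i}\xi r}\,\dd r=F(-\mathrm{i}\xi)=0$. This says that the Fourier transform of the $L_{1}(\Real)$-function obtained by extending $\psi$ by zero to all of $\Real$ vanishes identically; by the injectivity of the Fourier transform on $L_{1}(\Real)$, $\psi=0$ a.e.\ on $\RPlus$. Since $r^{p}\enumber{-r}>0$ for $r>0$, this forces $g(\sqrt r)=0$ for a.e.\ $r>0$, and the change of variables $r=x^{2}$ gives $g(x)=0$ for a.e.\ $x\in\RPlus$.

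The one delicate point is the passage from ``all moments of $\psi$ vanish'' to ``$\psi=0$ a.e.'': this is \emph{not} a consequence of a generic moment-uniqueness theorem, because the Stieltjes moment problem on $\RPlus$ may be indeterminate. What makes the argument work is the exponential factor in $\psi$, which renders $F$ holomorphic on a half-plane containing $0$ (rather than merely smooth there), and that is exactly what forces $F$, hence $\widehat{\psi}$, to vanish identically. Everything else is routine: the $L_{1}$ bounds for $\psi$ and its weighted versions, the standard holomorphy-under-the-integral lemma, and the injectivity of the Fourier (equivalently, Laplace) transform.
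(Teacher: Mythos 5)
Your proof is correct, but it follows a genuinely different route from the paper's. The paper sets $f(r)\eqdef g(\sqrt r)\,r^{p}\,\enumber{-r/2}$, notes that $f\in L_{2}(\RPlus)$, reads the hypothesis as $\int_{0}^{\infty}f(r)\,r^{k}\,\enumber{-r/2}\,\dd r=0$ for all $k\in\No$, passes by linearity to $\int_{0}^{\infty}f(r)\,L_{k}(r)\,\enumber{-r/2}\,\dd r=0$, and then invokes the completeness of the Laguerre functions $r\mapsto L_{k}(r)\enumber{-r/2}$ as an orthogonal basis of $L_{2}(\RPlus)$ to conclude $f=0$ a.e. You instead work in $L_{1}(\RPlus)$, with $\psi(r)\eqdef g(\sqrt r)\,r^{p}\enumber{-r}$, show that the bilateral Laplace transform $F(z)=\int_{0}^{\infty}\psi(r)\enumber{zr}\,\dd r$ is holomorphic on the half-plane $\RE z<1$, identify $F^{(m)}(0)$ with the $m$-th moment of $\psi$, kill $F$ by the identity theorem, and then restrict to the imaginary axis to apply injectivity of the Fourier transform on $L_{1}$. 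Both arguments are really the same fact in disguise (moment-uniqueness under exponential weight), but the machinery differs: the paper's proof is shorter and leans on a classical completeness theorem for Laguerre polynomials, which fits naturally into a paper already organized around Laguerre expansions; your proof is more self-contained, trading that citation for the identity theorem and Fourier injectivity, and you correctly flag the delicate point that a naive appeal to Stieltjes moment-uniqueness would not suffice. Your parenthetical alternative via Lemma~\ref{lem:TBj} and injectivity of $\B_{p}$ would also work, but injectivity of $\B_{p}$ on $L_{\infty}(\RPlus)$ needs its own (short) argument, so it is not quite a free shortcut.
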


\begin{proof}
For every $k$ in $\No$, by the assumption, we have
\[
0=\gamma_{g}(k+p)
=\dfrac{1}{(k+p)!}
\int_{0}^{\infty} f(r)\,r^k\,\enumber{-r/2}\,\dd r,
\]
where
\[
f(r) \eqdef g(\sqrt{r})\,r^{p}\,\enumber{-r/2}.
\]
Notice that $f\in L_2(\RPlus)$.
By the linear property of the integral,
we conclude that for every $k$ in $\No$,
\[
\int_{0}^{+\infty} f(r) L_k(r) \enumber{-r/2}\,\dd r=0,
\]
where $L_k$ is the Laguerre polynomial of degree $k$.
We recall the definition of Laguerre polynomials below,
see \eqref{Laguerre-poly} and~\eqref{Laguerre-poly-explicit}.
The sequence of the ``Laguerre functions''
$r \mapsto L_k(r)\enumber{-r/2}$
form an orthogonal basis of $L_2(\RPlus)$
(see, e.g., \cite[Theorem~5.7.1]{Szego}).
Therefore, $f=0$ a.e.
\end{proof}

\section{Laguerre polynomials and eigenvalues sequences of radial Toeplitz operators}\label{Laguerre+radial-T}
In this section,
for any $m\in\No$,
we construct a sequence of defining symbols
$\left(a_{m,\xi}\right)_{\xi\in\No}$
in $C_{0}(\RPlus)$,
such that the following uniform approximation holds:
\[
\lim_{\xi\to\infty}
\bigl\|\gamma_{a_{m,\xi}}-\delta_{m}\bigr\|_{\infty}=0.
\]
As a by-product of this fact,  we conclude that
$\gamma(C_{0}(\RPlus))$ is dense in $c_{0}(\No)$, where  $\gamma(C_{0}(\RPlus))$ is given by~\eqref{eq:def-Gamma0}.

\begin{remark}
Let $m\in\No$.
By Proposition~\ref{prop:gamma_which_vanishes_from_certain_point},
the inverse eigenvalues problem ``$\delta_m=\gamma_g$''
has no exact solution $g$ in the class $L_\infty(\RPlus)$.
In other words,
$\delta_m\notin\gamma(L_\infty(\RPlus))$.
\end{remark}

Recall the Rodrigues formula and the explicit formula for the Laguerre polynomials
(see, e.g.,
\cite[(5.1.5) and (5.1.6)]{Szego}
or \cite[Sections 3.5 and 3.6]{Doman}):
\begin{equation}
\label{Laguerre-poly}
L_{n}(x)
=
\dfrac{1}{n!}\enumber{x}
\dfrac{\dd^{n}}{\dd x^{n}}
\left(\enumber{-x}x^{n}\right),
\end{equation}
\begin{equation}
\label{Laguerre-poly-explicit}
L_n (x) = \sum_{k=0}^n (-1)^k \binom{n}{k}
\frac{x^k}{k!}. 
\end{equation}
We notice that the functions $x \mapsto \enumber{-x}x^{n}$
 appear not only in \eqref{Laguerre-poly} but also  as weights for the integral
in~\eqref{gamma-radial-fock}.
Therefore, if defining symbols are given in terms of Laguerre polynomials, then the corresponding eigenvalue sequence is easy to calculate.
Inspired by this fact and by some ideas from~\cite{Barrera-Maximenko-Ramos},
we have invented the following special family of defining symbols.

\begin{definition}
\label{def:special_generating_symbols}
Let $m\in\No$ and $\xi\in\N$ with $\xi\geq2$.
Define $a_{m,\xi}\colon\RPlus\to\Real$,
\begin{equation}
\label{eq:seq-am}
a_{m,\xi}(x)
\eqdef
(-1)^{m}
\xi^{m+1}
\enumber{-(\xi-1)x^{2}}
L_{m}(\xi x^{2}).
\end{equation}
\end{definition}

\begin{lemma}
\label{lema-axim-fock}
Let $m\in\No$ and $\xi\in\N$ with $\xi\geq2$.
Then, $a_{m,\xi}\in C_0(\RPlus)$,
$\gamma_{a_{m,\xi}}\in \Gamma_0$,
and
\begin{equation}
\label{eq:gamma-a-mxi}
\gamma_{a_{m,\xi}}(n)
=
\begin{cases}
0, & n < m;
\\
\displaystyle\binom{n}{m}
\frac{1}{\xi^{n-m}},
& n \ge m.
\end{cases}
\end{equation}
\end{lemma}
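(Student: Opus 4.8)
The plan is to compute $\gamma_{a_{m,\xi}}(n)$ directly from the integral formula~\eqref{gamma-radial-fock} after the change of variables $x = \sqrt{r}$, and then read off the smoothness and decay properties. Writing $g = a_{m,\xi}$ and recalling that $\gamma_g(n) = \frac{1}{n!}\int_0^\infty g(\sqrt r)\,e^{-r}r^n\,\dd r$, substitution of~\eqref{eq:seq-am} gives
\[
\gamma_{a_{m,\xi}}(n)
= \frac{(-1)^m \xi^{m+1}}{n!}\int_0^\infty e^{-(\xi-1)r} L_m(\xi r)\, e^{-r} r^n\,\dd r
= \frac{(-1)^m \xi^{m+1}}{n!}\int_0^\infty e^{-\xi r} L_m(\xi r)\, r^n\,\dd r.
\]
A further substitution $s = \xi r$ turns this into $\frac{(-1)^m}{n!}\int_0^\infty e^{-s}L_m(s)\,(s/\xi)^n\,\dd s = \frac{(-1)^m}{n!\,\xi^n}\int_0^\infty e^{-s}L_m(s)\,s^n\,\dd s$. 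So everything reduces to evaluating the single integral $I_{m,n} \eqdef \int_0^\infty e^{-s} L_m(s)\, s^n\,\dd s$.

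The main step is thus the evaluation of $I_{m,n}$. I would use the explicit formula~\eqref{Laguerre-poly-explicit}, $L_m(s) = \sum_{k=0}^m (-1)^k \binom{m}{k} s^k/k!$, together with $\int_0^\infty e^{-s}s^{n+k}\,\dd s = (n+k)!$, to write
\[
I_{m,n} = \sum_{k=0}^m \frac{(-1)^k}{k!}\binom{m}{k}(n+k)!.
\]
The cleanest route to a closed form is the orthogonality/moment property of Laguerre polynomials: since $\{L_k\}$ is orthogonal with respect to $e^{-s}\,\dd s$ and $\deg L_m = m$, we have $I_{m,n} = \int_0^\infty e^{-s}L_m(s)\,s^n\,\dd s = 0$ whenever $n < m$ (because $s^n$ is a polynomial of degree $< m$, hence a combination of $L_0,\dots,L_{n}$, all orthogonal to $L_m$). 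For $n \ge m$, one computes $I_{m,n} = (-1)^m\,\frac{n!}{(n-m)!}\cdot(\text{something})$; in fact the standard identity is $\int_0^\infty e^{-s}s^n L_m(s)\,\dd s = (-1)^m \binom{n}{m} m!\,\frac{n!}{n!}$... let me pin it down via the alternative: using the Rodrigues formula~\eqref{Laguerre-poly} and integrating by parts $m$ times, $I_{m,n} = \frac{1}{m!}\int_0^\infty s^n \frac{\dd^m}{\dd s^m}(e^{-s}s^m)\,\dd s = \frac{(-1)^m}{m!}\int_0^\infty \frac{\dd^m}{\dd s^m}(s^n)\, e^{-s}s^m\,\dd s$, and $\frac{\dd^m}{\dd s^m}s^n = \frac{n!}{(n-m)!}s^{n-m}$ for $n\ge m$ (and $0$ for $n<m$), so $I_{m,n} = \frac{(-1)^m}{m!}\cdot\frac{n!}{(n-m)!}\int_0^\infty e^{-s}s^n\,\dd s = \frac{(-1)^m n!\,n!}{m!\,(n-m)!}$. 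Hence $\frac{(-1)^m}{n!\,\xi^n}I_{m,n} = \frac{n!}{m!\,(n-m)!\,\xi^n} = \binom{n}{m}\xi^{-n}$ for $n\ge m$ and $0$ for $n<m$. But wait—this is off by a factor $\xi^m$ from~\eqref{eq:gamma-a-mxi}; the discrepancy is exactly the $\xi^{m+1}$ prefactor minus the $\xi$ absorbed in $ds=\xi\,dr$, i.e. $\xi^{m+1}\cdot\xi^{-1}\cdot\xi^{-n} = \xi^{m-n}$, which matches $\binom{n}{m}\xi^{-(n-m)}$. So the bookkeeping of powers of $\xi$ is the one place to be careful.

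Finally, for the membership claims: $a_{m,\xi}$ is a polynomial in $x^2$ times $e^{-(\xi-1)x^2}$ with $\xi-1 \ge 1 > 0$, hence continuous on $\RPlus$ and tending to $0$ at $+\infty$, so $a_{m,\xi} \in C_0(\RPlus)$; and since $C_0(\RPlus) \subseteq \X$ and the resulting $\gamma_{a_{m,\xi}}$ has $\binom{n}{m}\xi^{-(n-m)} \to 0$ as $n\to\infty$ (polynomial growth beaten by geometric decay since $\xi \ge 2$), we get $\gamma_{a_{m,\xi}} \in c_0(\No)$, hence $\gamma_{a_{m,\xi}} \in \gamma(C_0(\RPlus)) = \Gamma_0$. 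The only genuine obstacle is the Laguerre moment identity; I would present the Rodrigues-formula-plus-integration-by-parts derivation since it simultaneously yields the vanishing for $n<m$ and the closed form for $n\ge m$ with no case analysis on binomial sums.
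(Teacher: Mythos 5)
Your proof is correct, and it takes essentially the same route as the paper: substitute $x=\sqrt r$, then $s=\xi r$, reducing everything to the Laguerre moment integral $\int_0^\infty e^{-s}L_m(s)\,s^n\,\dd s$. The one genuine difference is how that integral is handled. The paper simply cites the closed form from Doman's book (formula 3.6.5), whereas you derive it from the Rodrigues formula \eqref{Laguerre-poly} by integrating by parts $m$ times, which yields both the vanishing for $n<m$ and the value $(-1)^m(n!)^2/(m!\,(n-m)!)$ for $n\ge m$ in one stroke. Your version is more self-contained and is arguably preferable for exposition, since it explains \emph{why} the integral vanishes for $n<m$ (the $m$-th derivative of $s^n$ is zero) rather than asking the reader to trust a reference; the paper's version is shorter. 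One small caution about presentation: in your intermediate display after the $s=\xi r$ substitution you temporarily dropped the $\xi^{m+1}$ prefactor and the Jacobian $1/\xi$, arriving at a factor $\xi^{-n}$ instead of $\xi^{-(n-m)}$; you caught and corrected this in the subsequent bookkeeping remark, but in a final write-up you should carry the prefactor through the substitution cleanly so the power of $\xi$ comes out right on the first pass. Your justification of $a_{m,\xi}\in C_0(\RPlus)$ (polynomial in $x^2$ times $e^{-(\xi-1)x^2}$ with $\xi-1\ge 1$) and of $\gamma_{a_{m,\xi}}\in c_0(\No)$ (geometric decay dominates the binomial) both match what the paper does, the latter being implicit there but worth stating as you did.
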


\begin{proof}
By~\eqref{Laguerre-poly-explicit},
our function $a_{m,\xi}$
is a linear combination
of functions of the following form,
with $k\in\{0,\ldots,m\}$:
\begin{equation}
\label{eq:power_by_exp_special}
x \mapsto (-1)^{m}\xi^{m+1}x^{k}\enumber{-(\xi-1)x^{2}}.
\end{equation}
It is easy to see that all these functions belong to $C_0(\RPlus)$.
Therefore, $a_{m,\xi}\in C_0(\RPlus)$
and $\gamma_{a_{m,\xi}}\in \Gamma_0$.

Let $n\in\No$.
To deduce an explicit formula for $\gamma_{a_{m,\xi}}(n)$,
we start with~\eqref{gamma-radial-fock}
and apply the change of variables $r=\xi x$:
\begin{align*}
\gamma_{a_{m,\xi}}(n)
&=
\dfrac{\xi^{m+1}}{n!}
\int_{0}^{+\infty}\hspace{-3pt}
\enumber{-\xi x}
L_{m}(\xi x)x^{n}\,\dd x
\\
&=
\dfrac{(-1)^{m}}{n!\,\xi^{n-m}}
\int_{0}^{+\infty}\hspace{-3pt}
\enumber{-r}L_{m}(r)r^{n}\,\dd r.
\end{align*}
The last integral is related to the expansion of the function $r^n$ in terms of the Laguerre polynomials $L_m(r)$.
By~\cite[3.6.5]{Doman},
\begin{equation}
\label{for:lagxn}
\int_{0}^{+\infty}
\enumber{-r}L_m(r)r^n\,\dd r
=\begin{cases}
0, & m>n
\\
\displaystyle \dfrac{(-1)^{m}(n!)^{2}}{(n-m)!\,m!},
& m\leq n.
\end{cases}
\end{equation}
Hence, \eqref{eq:gamma-a-mxi} holds.
\end{proof}

\begin{theorem}
\label{thm-gamma-delta}
Let $m\in\No$. Then,
\[
\lim_{\xi\to\infty}
\bigl\|\gamma_{a_{m,\xi}}-\delta_{m}\bigr\|_{\infty}=0.
\]
As a consequence,
$\delta_{m}\in\overline{\gamma(C_{0}(\RPlus))}$.
\end{theorem}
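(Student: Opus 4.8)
The plan is to use the explicit formula~\eqref{eq:gamma-a-mxi} from Lemma~\ref{lema-axim-fock} and show that, as $\xi\to\infty$, the sequence $\gamma_{a_{m,\xi}}$ converges to $\delta_m$ uniformly in $n$. Recall that $\delta_m$ is the sequence with $\delta_m(m)=1$ and $\delta_m(n)=0$ for $n\neq m$. By Lemma~\ref{lema-axim-fock}, $\gamma_{a_{m,\xi}}(n)=0=\delta_m(n)$ for all $n<m$, so these indices contribute nothing to the error. For $n\ge m$ we have
\[
\bigl(\gamma_{a_{m,\xi}}-\delta_{m}\bigr)(n)
=
\binom{n}{m}\frac{1}{\xi^{n-m}}-[n=m]
=
\begin{cases}
0, & n=m;\\[1ex]
\displaystyle\binom{n}{m}\frac{1}{\xi^{n-m}}, & n>m,
\end{cases}
\]
so the $n=m$ term is also exactly zero, and
\[
\bigl\|\gamma_{a_{m,\xi}}-\delta_{m}\bigr\|_{\infty}
=
\sup_{n>m}\binom{n}{m}\frac{1}{\xi^{n-m}}
=
\sup_{k\ge1}\binom{m+k}{m}\frac{1}{\xi^{k}}.
\]

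The heart of the argument is then the elementary estimate that this last supremum tends to $0$ as $\xi\to\infty$. First I would bound the binomial coefficient polynomially: for $k\ge1$,
\[
\binom{m+k}{m}
=\frac{(k+1)(k+2)\cdots(k+m)}{m!}
\le \frac{(k+m)^m}{m!}
\le (k+m)^m .
\]
Hence $\binom{m+k}{m}\xi^{-k}\le (k+m)^m\xi^{-k}$. Since for fixed $m$ the function $t\mapsto (t+m)^m 2^{-t}$ is bounded on $[1,\infty)$ by some constant $C_m$ (it tends to $0$ as $t\to\infty$), we get, for $\xi\ge 2$,
\[
(k+m)^m\xi^{-k}
=(k+m)^m 2^{-k}\Bigl(\frac{2}{\xi}\Bigr)^{k}
\le C_m\Bigl(\frac{2}{\xi}\Bigr)^{k}
\le C_m\cdot\frac{2}{\xi},
\]
uniformly in $k\ge 1$. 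Therefore $\bigl\|\gamma_{a_{m,\xi}}-\delta_{m}\bigr\|_{\infty}\le 2C_m/\xi\to 0$ as $\xi\to\infty$, which proves the limit.

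For the consequence, observe that each $a_{m,\xi}\in C_0(\RPlus)$ by Lemma~\ref{lema-axim-fock}, and $\gamma_{a_{m,\xi}}\in c_0(\No)$ (indeed it has finitely many nonzero terms up to geometric decay, and lies in $\Gamma_0$). Since $\delta_m$ is the uniform limit of the sequences $\gamma_{a_{m,\xi}}\in\gamma(C_0(\RPlus))$, it belongs to the uniform closure $\overline{\gamma(C_0(\RPlus))}$. I do not expect any real obstacle here; the only mild care needed is to make the bound on $\sup_{k\ge1}\binom{m+k}{m}\xi^{-k}$ genuinely uniform in $k$, which the polynomial-versus-geometric comparison above handles cleanly. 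An even softer alternative, avoiding the explicit constant $C_m$, is to note that the power series $\sum_{k\ge0}\binom{m+k}{m}t^k=(1-t)^{-(m+1)}$ converges for $|t|<1$, so for $\xi\ge2$ the tail $\sum_{k\ge1}\binom{m+k}{m}\xi^{-k}=(1-1/\xi)^{-(m+1)}-1\to 0$ as $\xi\to\infty$, and this sum dominates the supremum.
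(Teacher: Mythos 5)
Your proof is correct, but it takes a genuinely different route from the paper's. Both start from the same formula~\eqref{eq:gamma-a-mxi} and reduce the problem to estimating $\sup_{k\ge1}\binom{m+k}{m}\xi^{-k}$. The paper then shows that, for $\xi\ge\frac{m+2}{2}$, the sequence $n\mapsto\gamma_{a_{m,\xi}}(n)$ is \emph{decreasing} on $\{m+1,m+2,\ldots\}$, so the supremum is attained at the first index $n=m+1$ and equals exactly $\binom{m+1}{m}\xi^{-1}=\frac{m+1}{\xi}$; this gives a clean explicit rate with constant $m+1$. You instead bound each term generically by comparing the polynomial growth of $\binom{m+k}{m}\le(k+m)^m$ against the geometric decay of $\xi^{-k}$ (factoring out $2^{-k}$ and absorbing it into a constant $C_m$), obtaining the bound $2C_m/\xi$; your ``softer alternative'' of dominating the sup by the full negative-binomial series $\sum_{k\ge1}\binom{m+k}{m}\xi^{-k}=(1-1/\xi)^{-(m+1)}-1$ is also valid and perhaps the slickest of the three. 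The paper's monotonicity argument buys an exact value of the supremum and a sharp constant; your argument buys generality (it works without identifying where the max sits) at the cost of a nonexplicit constant $C_m$, which the power-series variant avoids. The final step, deducing $\delta_m\in\overline{\gamma(C_0(\RPlus))}$, is identical in both.

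One tiny point of hygiene: the membership $\gamma_{a_{m,\xi}}\in c_0(\No)$ that you invoke is indeed part of Lemma~\ref{lema-axim-fock} (there written as $\gamma_{a_{m,\xi}}\in\Gamma_0$, a notation the paper uses without formally defining it, evidently meaning $\gamma(C_0(\RPlus))\subseteq c_0(\No)$). Your parenthetical justification via geometric decay is an acceptable self-contained alternative.
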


\begin{proof}
We suppose that $\xi\in\N$ be such that $\xi\geq \dfrac{m+2}{2}$.
For brevity, we denote $\gamma_{a_{m,\xi}}$ by $f_\xi$.
By Lemma~\ref{lema-axim-fock},
the first $m+1$ values of the sequence $f_\xi$
coincide with the corresponding values of the sequence $\delta_m$:
\[
f_\xi(0)
=f_\xi(1)
=\cdots
=f_\xi(m-1)=0,\qquad
f_\xi(m)=1.
\]
For $n\geq m+1$, we have
\begin{align*}
f_\xi(n)-f_\xi(n+1)
&=
\binom{n}{m} \frac{1}{\xi^{n-m}}
-\frac{n+1}{n+1-m}\,\binom{n}{m} \frac{1}{\xi^{n+1-m}}
\\
&=
\frac{1}{(n+1-m)\xi^{n+1-m}}
\binom{n}{m}
\left((n+1-m)\xi - (n+1)\right)
\\
&\geq
\binom{n}{m}
\dfrac{2}{(n+1-m)\,\xi^{n-m+1}}\left(\,\xi -\dfrac{m+2}{2}\right)
\geq
0.
\end{align*}
Therefore, the sequence $(f_\xi(n))_{n=m+1}^\infty$
is decreasing, and
\begin{align*}
\bigl\|\gamma_{a_{m,\xi}}-\delta_{m}\bigr\|_{\infty}
&=
\sup_{n\geq m+1}
\bigl|f_\xi(n)-\delta_{m}(n)\bigr|
=
\sup_{n\geq m+1} f_\xi(n)
\\
&=
f_\xi(m+1)
=\binom{m+1}{m} \frac{1}{\xi^{m+1-m}}
=\frac{m+1}{\xi}.
\end{align*}
The last expression tends to $0$ as $\xi$ tends to $\infty$.
\end{proof}

We denote by $c_{00}(\No)$
the vector space of complex sequences of finite support.
In other words,
\[
c_{00}(\No)
= \bigl\{\sigma\in\Complex^{\No}\colon\
\exists N\in\No\quad \forall n\ge N\quad \sigma(n)=0\bigr\}.
\]
It is well known that $c_{00}(\No)$ is a dense subset of $c_0(\No)$.

In the following corollary,
we use the obvious linear property of the function $\gamma\colon L_{\infty}\to\ell_{\infty}(\No)$:
\begin{equation}
\label{eq:gamma_linear_property}
\gamma_{g+h}=\gamma_{g}+\gamma_{h},\qquad \gamma_{ch}=c\,\gamma_{h}\qquad (g,h\in L_{\infty}(\RPlus),\ c\in\Complex). 
\end{equation}

\begin{corollary}
\label{Cor:gamma-dense-c00}
$c_{00}(\No)$ is contained in the closure of $\gamma(C_{0}(\RPlus))$.
\end{corollary}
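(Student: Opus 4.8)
The plan is to combine Theorem~\ref{thm-gamma-delta} with the obvious linearity of $\gamma$ recorded in~\eqref{eq:gamma_linear_property} and the fact that the closure of a subspace is again closed under finite linear combinations. First I would take an arbitrary $\sigma\in c_{00}(\No)$ and write it as a finite linear combination of the standard unit sequences: if $N\in\No$ is such that $\sigma(n)=0$ for $n\ge N$, then $\sigma=\sum_{m=0}^{N-1}\sigma(m)\,\delta_m$. By Theorem~\ref{thm-gamma-delta}, each $\delta_m$ lies in $\overline{\gamma(C_{0}(\RPlus))}$, so it suffices to observe that the closure of $\gamma(C_{0}(\RPlus))$ is closed under finite linear combinations.

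Next I would justify this last observation. Since $C_0(\RPlus)$ is a vector subspace of $L_\infty(\RPlus)$ and $\gamma$ is linear by~\eqref{eq:gamma_linear_property}, the set $\gamma(C_{0}(\RPlus))$ is a linear subspace of $\ell_\infty(\No)$. The uniform closure of a linear subspace of a normed space is again a linear subspace; hence $\overline{\gamma(C_{0}(\RPlus))}$ is closed under addition and scalar multiplication, and therefore contains every finite linear combination $\sum_{m=0}^{N-1}\sigma(m)\,\delta_m=\sigma$. This gives $c_{00}(\No)\subseteq\overline{\gamma(C_{0}(\RPlus))}$.

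There is really no hard part here; the corollary is a soft consequence of Theorem~\ref{thm-gamma-delta}. The only point worth stating explicitly is that one needs the \emph{closure} to be a subspace, not just $\gamma(C_{0}(\RPlus))$ itself, because no single $\delta_m$ belongs to $\gamma(C_{0}(\RPlus))$ (indeed not even to $\gamma(L_\infty(\RPlus))$, by the Remark following Proposition~\ref{prop:gamma_which_vanishes_from_certain_point}); only finite truncations of the approximating symbols $a_{m,\xi}$ do the job, and the passage to the limit is what produces $\sigma$. I would phrase the proof in one short paragraph: fix $\sigma\in c_{00}(\No)$, choose $N$ with $\sigma(n)=0$ for $n\ge N$, write $\sigma=\sum_{m=0}^{N-1}\sigma(m)\delta_m$, invoke Theorem~\ref{thm-gamma-delta} for each $m<N$, and conclude using that $\overline{\gamma(C_{0}(\RPlus))}$ is a linear subspace of $\ell_\infty(\No)$ by~\eqref{eq:gamma_linear_property}.
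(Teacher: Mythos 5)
Your proof is correct and is essentially the same as the paper's: both decompose $\sigma=\sum_{m=0}^{N-1}\sigma(m)\delta_m$, invoke Theorem~\ref{thm-gamma-delta} for each $\delta_m$, and use linearity of $\gamma$. The only difference is one of packaging: the paper makes the limit explicit by constructing $u_\xi=\sum_{k=0}^{N-1}\sigma(k)a_{k,\xi}\in C_0(\RPlus)$ and bounding $\|\gamma_{u_\xi}-\sigma\|_\infty\le\|\sigma\|_\infty\sum_k\|\gamma_{a_{k,\xi}}-\delta_k\|_\infty$, whereas you appeal to the soft fact that the closure of a linear subspace is a linear subspace; the paper's explicit construction is not logically necessary for the corollary but fits the constructive theme and is reused verbatim in the proof of Theorem~\ref{thm:gamma-dense-c0}.
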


\begin{proof}
Let $\sigma\in c_{00}(\No)$.
We choose $N$ in $\No$ such that $\sigma(n)=0$ for all $n\ge N$.
Then, $\sigma$ can be written as the following linear combination of basic sequences:
\[
\sigma = \sum_{k=0}^{N-1} \sigma(k) \delta_k.
\]
For every $\xi$ in $\N$ with $\xi\geq2$,
we define $u_\xi$ as the linear combination of the functions $a_{k,\xi}$ with the same coefficients:
\begin{equation}
\label{eq:b_xi_def}
u_\xi
\eqdef 
\sum_{k=0}^{N-1}\sigma(k)a_{k,\xi}.
\end{equation}
More explicitly,
\begin{equation}
\label{eq:b_xi_explicit}
u_\xi(x)
=
\enumber{-(\xi-1)x^{2}}\sum_{k=0}^{N-1}(-1)^{k}\sigma(k)\xi^{k+1}L_{k}(\xi x^{2}).
\end{equation}
Then, $u_\xi\in C_0(\RPlus)$ and
\[
\bigl\|\gamma_{u_{\xi}}-\sigma\bigr\|_{\infty}
\leq
\bigl\|\sigma\bigr\|_\infty
\sum_{k=0}^{N-1}\bigl\|\gamma_{a_{k,\xi}}-\delta_{k}\bigr\|_{\infty}.
\]
The last expression tends to $0$ by Theorem~\ref{thm-gamma-delta}.
\end{proof}

\begin{theorem}
\label{thm:gamma-dense-c0}
$\gamma(C_{0}(\RPlus))$ is a dense subset of $c_{0}(\No)$.
\end{theorem}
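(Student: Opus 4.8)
The plan is to derive Theorem~\ref{thm:gamma-dense-c0} from Corollary~\ref{Cor:gamma-dense-c00} together with two standard facts: that $c_{00}(\No)$ is dense in $c_{0}(\No)$, and that the closure $\overline{\gamma(C_{0}(\RPlus))}$ is a closed subset of $\ell_{\infty}(\No)$. First I would show the inclusion $\overline{\gamma(C_{0}(\RPlus))}\subseteq c_{0}(\No)$. For this, recall that $C_{0}(\RPlus)\subseteq\X$ (noted right after~\eqref{Def:C0}), so every $g\in C_{0}(\RPlus)$ already gives $\gamma_{g}\in c_{0}(\No)$ by the definition~\eqref{eq:def-Gamma0}; hence $\gamma(C_{0}(\RPlus))\subseteq c_{0}(\No)$, and since $c_{0}(\No)$ is a closed subspace of $\ell_{\infty}(\No)$, its closure is contained in $c_{0}(\No)$ as well.

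Next I would establish the reverse inclusion $c_{0}(\No)\subseteq\overline{\gamma(C_{0}(\RPlus))}$. By Corollary~\ref{Cor:gamma-dense-c00}, $c_{00}(\No)\subseteq\overline{\gamma(C_{0}(\RPlus))}$. Taking closures and using that the closure of a set is closed (hence equal to its own closure), we obtain $\overline{c_{00}(\No)}\subseteq\overline{\gamma(C_{0}(\RPlus))}$. Since $c_{00}(\No)$ is dense in $c_{0}(\No)$ (a well-known fact, already recalled in the excerpt just before Corollary~\ref{Cor:gamma-dense-c00}), we have $\overline{c_{00}(\No)}=c_{0}(\No)$, which yields $c_{0}(\No)\subseteq\overline{\gamma(C_{0}(\RPlus))}$.

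Combining the two inclusions gives $\overline{\gamma(C_{0}(\RPlus))}=c_{0}(\No)$, which is precisely the assertion that $\gamma(C_{0}(\RPlus))$ is dense in $c_{0}(\No)$. I do not anticipate any real obstacle here: the theorem is essentially a repackaging of Corollary~\ref{Cor:gamma-dense-c00}. The only point worth stating carefully is the ``easy'' direction $\gamma(C_{0}(\RPlus))\subseteq c_{0}(\No)$, which is built into the definition~\eqref{eq:def-Gamma0} but deserves an explicit word so that the reader sees the density claim is genuinely an equality of closures, not merely a one-sided approximation result. If one prefers a fully self-contained argument for $\gamma_{g}\in c_{0}(\No)$ when $g\in C_{0}(\RPlus)$, it also follows directly from~\eqref{gamma-radial-fock}: writing $\gamma_{g}(n)=\frac{1}{n!}\int_{0}^{\infty}g(\sqrt r)\,\enumber{-r}r^{n}\,\dd r$, one splits the integral at a point beyond which $|g(\sqrt r)|<\eps$ and uses that the probability density $\frac{1}{n!}\enumber{-r}r^{n}$ concentrates near $r=n$ as $n\to\infty$, so $\gamma_{g}(n)\to 0$.
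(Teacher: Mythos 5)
Your proposal is correct and takes essentially the same approach as the paper: both proofs derive the result from Corollary~\ref{Cor:gamma-dense-c00} together with the density of $c_{00}(\No)$ in $c_{0}(\No)$, yielding $c_{0}(\No)=\overline{c_{00}(\No)}\subseteq\overline{\gamma(C_{0}(\RPlus))}\subseteq c_{0}(\No)$. The paper additionally spells out a more explicit constructive version (exhibiting the approximants $u_\xi$ built from only finitely many coordinates of $\sigma$ and verifying the tail estimate directly), in keeping with its stated emphasis on constructive approximation, but the underlying argument is the one you give.
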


\begin{proof}
By Corollary~\ref{Cor:gamma-dense-c00},
$c_{00}(\No)\subseteq\overline{\gamma(C_{0}(\RPlus))}$.
Since $c_{00}(\No)$ is a dense subset of $c_0(\No)$,
we conclude that
\[
c_0(\No)
=\overline{c_{00}(\No)}
\subseteq \overline{\gamma(C_{0}(\RPlus))}
\subseteq c_0(\No).
\]
Here is a more detailed reasoning.
Given a sequence $\sigma$ in $c_0(\No)$
and a number $\eps>0$,
we find $N$ in $\No$ such that $|\sigma(k)|<\eps/2$
for every $k$ in $\No$ with $k\ge N$.
For every $\xi$ in $\N$ with $\xi\geq2$,
we construct $u_\xi$ by~\eqref{eq:b_xi_def}.
Notice that we only use the first $N$
elements of the sequence $\sigma$
as coefficients in~\eqref{eq:b_xi_def}.
Reasoning as in the proof of Corollary~\ref{Cor:gamma-dense-c00},
we see that
\[
\lim_{\xi\to\infty} \max_{0\le k<N} |u_\xi(k) - \sigma(k)| = 0.
\]
On the other hand, reasoning as the proof of Theorem \ref{thm-gamma-delta},  we get
\[
\lim_{\xi\to\infty} \sup_{k\ge N} |u_\xi(k)| = 0.
\]
Therefore, for $\xi$ large enough,
\[
\sup_{k\ge N} |u_\xi(k) - \sigma(k)|
\le
\sup_{k\ge N} |u_\xi(k)| + \sup_{k\ge N} |\sigma(k)|
<
\frac{\eps}{2}+\frac{\eps}{2}
= \eps.
\]
Joining the cases $k<N$ and $k\ge N$, we conclude that
$\|u_\xi - \sigma\|_\infty <\eps$ for $\xi$ large enough.
\end{proof}

We denote by $C_{\lim}(\RPlus)$
the vector space consisting of all continuous functions
$\RPlus\to\Complex$
that have a finite limit at $+\infty$.
This is a Banach space with respect to the supremum-norm.
It can be identified with $C([0,+\infty])$.
We consider $C_{\lim}(\RPlus)$
as a subspace of $L_\infty(\RPlus)$. 

\begin{theorem}
\label{thm:gamma-dense-c}
$\gamma(C_{\lim}(\RPlus))
$
is a dense subset of $c(\No)$.
\end{theorem}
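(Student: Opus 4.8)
The plan is to deduce this from Theorem~\ref{thm:gamma-dense-c0} by separating a convergent sequence into its limit and a zero-convergent remainder, so the whole statement becomes a short corollary.

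First I would record the action of $\gamma$ on constant symbols. If $c\in\Complex$ and $g\equiv c$ on $\RPlus$, then $g\in C_{\lim}(\RPlus)$ and, by~\eqref{gamma-radial-fock} together with the normalization $\frac{1}{n!}\int_{0}^{+\infty}\enumber{-r}r^{n}\,\dd r=1$, the sequence $\gamma_{g}$ is the constant sequence with all entries equal to $c$. Denoting by $e$ the constant sequence $(1,1,1,\ldots)$, this reads $\gamma_{g}=c\,e$. This already yields the inclusion $\gamma(C_{\lim}(\RPlus))\subseteq c(\No)$: for $g\in C_{\lim}(\RPlus)$ with $\ell\eqdef\lim_{x\to+\infty}g(x)$, the splitting $g=\ell+g_{0}$ with $g_{0}\in C_{0}(\RPlus)$ and the linearity~\eqref{eq:gamma_linear_property} give $\gamma_{g}=\ell\,e+\gamma_{g_{0}}$, and $\gamma_{g_{0}}\in c_{0}(\No)$ by~\eqref{eq:def-Gamma0}, so $\gamma_{g}$ converges to $\ell$.

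For the density, I would take an arbitrary $\sigma\in c(\No)$ and $\eps>0$, set $\ell\eqdef\lim_{n\to\infty}\sigma(n)$ and $\sigma_{0}\eqdef\sigma-\ell\,e\in c_{0}(\No)$, and invoke Theorem~\ref{thm:gamma-dense-c0} to produce $g_{0}\in C_{0}(\RPlus)$ with $\|\gamma_{g_{0}}-\sigma_{0}\|_{\infty}<\eps$. Then $g\eqdef\ell+g_{0}$ lies in $C_{\lim}(\RPlus)$ (a constant plus an element of $C_{0}(\RPlus)$), and~\eqref{eq:gamma_linear_property} together with the computation above gives $\gamma_{g}=\ell\,e+\gamma_{g_{0}}$, whence
\[
\|\gamma_{g}-\sigma\|_{\infty}=\|(\ell\,e+\gamma_{g_{0}})-(\ell\,e+\sigma_{0})\|_{\infty}=\|\gamma_{g_{0}}-\sigma_{0}\|_{\infty}<\eps .
\]

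There is essentially no obstacle: the only computation genuinely needed is the value of $\gamma$ on constant symbols, which is immediate. If one wants the approximating symbols fully explicit, one simply adds the constant $\ell=\lim\sigma$ to the symbol $u_{\xi}$ from~\eqref{eq:b_xi_def} built from the finitely many leading terms of $\sigma-\ell\,e$.
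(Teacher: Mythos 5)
Your proof is correct and follows essentially the same route as the paper's: both decompose a convergent sequence $\sigma$ into its limit value times the constant sequence plus a $c_{0}$-remainder, approximate the remainder via Theorem~\ref{thm:gamma-dense-c0} by a symbol in $C_{0}(\RPlus)$, add back the constant symbol, and use $\gamma_{\mathbf{1}}=\mathbf{1}_{\No}$ together with the linearity of $\gamma$. The only (welcome) addition on your part is that you spell out the inclusion $\gamma(C_{\lim}(\RPlus))\subseteq c(\No)$, which the paper leaves implicit.
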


\begin{proof}
Let $\mathbf{1}_{\No}$ be the constant $1$ sequence
(defined on $\No$)
and $\mathbf{1}$ be the constant $1$ function defined on $\RPlus$.
The next simple property follows directly from~\eqref{gamma-radial-fock}:
\begin{equation}
\label{eq:gamma_of_constant_function}
\gamma_{\mathbf{1}}=\mathbf{1}_{\No}.
\end{equation}
Let $\sigma\in c(\No)$.
We denote the limit of $\sigma$ by $p$.
Then, the sequence
$(\sigma(n)-p \mathbf{1}_{\No})_{n\in\No}$ belongs to $c_0(\No)$.
By Theorem~\ref{thm:gamma-dense-c0},
there exists a sequence of functions
$(u_\xi)_{\xi\in\N}$ such that
$u_\xi\in C_0(\RPlus)$ for every $\xi$, and
\begin{equation}
\label{eq:approximation_of_the_sequence_after_restring_the_limit}
\lim_{\xi\to+\infty}
\bigl\|\gamma_{u_\xi}
-\left(\sigma-p \mathbf{1}_{\No}\right)\bigr\|_{\infty}
= 0.
\end{equation}
For each $\xi$, we define $v_\xi\colon\No\to\Complex$ by
\[
v_\xi \eqdef u_\xi + p \mathbf{1}.
\]
Then, by~\eqref{gamma-radial-fock}
and~\eqref{eq:gamma_linear_property},
\[
\gamma_{v_\xi}(n)
= \gamma_{u_\xi}(n) + p \mathbf{1}_{\No}.
\]
Thus, \eqref{eq:approximation_of_the_sequence_after_restring_the_limit}
can be rewritten as
\[
\lim_{m\to+\infty}
\bigl\|\gamma_{v_\xi}-\sigma\bigr\|_{\infty}
=0.
\qedhere
\]
\end{proof}
\begin{remark}
Now, using results from this paper,
we give a more constructive and clear proof
of the main density result from~\cite{Esmeral-Maximenko-2016}:
\[
\overline{\gamma(L_\infty(\RPlus))}=\SQRTO.
\]
Let $\sigma\in\SQRTO$ and $\eps>0$.
By Proposition~\ref{Prop:taulNsig-gamma},
there exist $N$ in $\N$ and $g$ in $L_\infty(\RPlus)$ such that
\begin{equation}
\label{eq:final-remark-1}
\|\tau_{L}^{N}\sigma-\gamma_{g}\|_{\infty}<\dfrac{\eps}{2}.
\end{equation}
Let $x\eqdef \sigma - \tau_L^N \sigma$.
By Proposition~\ref{eq:sequence_minus_its_shift},
$x\in c_0(\No)$.
By Theorem \ref{thm:gamma-dense-c0},
there exists $h$ in $C_0(\RPlus)$ such that
\begin{equation}
\label{eq:final-remark-2}
\|x-\gamma_{h}\|_{\infty}<\dfrac{\eps}{2}.
\end{equation}
Finally, we set $f \eqdef g+h$.
Then, $f\in L_\infty(\RPlus)$ and,
by~\eqref{eq:final-remark-1} and~\eqref{eq:final-remark-2},
\begin{align*}
\|\sigma-\gamma_f\|_{\infty}
&=
\|(\tau_L^N \sigma + x) - (\gamma_g + \gamma_h)\|_\infty
\\
&\leq
\|\tau_{L}^{N}\sigma - \gamma_{g}\|_{\infty}
+ \|x-\gamma_{h}\|_{\infty}
<
\dfrac{\eps}{2}+\dfrac{\eps}{2}=\eps.
\end{align*}  
\end{remark}

\section*{Declarations}

\subsection*{Funding}

The first author has received research support for the project PRY-63 from Universidad de Caldas, Colombia.
The research of the second author has been supported
by Instituto Polit\'{e}cnico Nacional, Mexico
(Proyectos IPN-SIP)
and by SECIHTI, Mexico
(Proyecto ``Ciencia de Frontera'' FORDECYT-PRONACES/61517/2020).
 
\subsection*{Data Availability}
This theoretical research does not use any data set.


\begin{thebibliography}{40}

\bibitem{Abreu-Faustino}
Abreu, L.D.,
Faustino, N.:
On Toeplitz operators and localization operators.
Proc. Amer. Math. Soc. \textbf{143}, 4317--4323 (2015).
\url{https://doi.org/10.1090/proc/12211}

\bibitem{Barrera-Maximenko-Ramos}
Barrera-Castel\'{a}n, R.M., Maximenko, E.A., Ramos-Vazquez, G.: 
C*-algebras generated by radial Toeplitz operators on polyanalytic weighted Bergman spaces.
Complex Anal. Oper. Theory \textbf{18}, 76 (2024).  
\url{https://doi.org/10.1007/s11785-024-01509-1}
\bibitem{Bauer-Issa}
Bauer, W.,
Issa, H.:
Commuting Toeplitz operators with quasi-homogeneous
symbols on the Segal--Bargmann space.
J. Math. Anal. Appl. \textbf{386}(1), 213--235 (2012). 
\url{https://doi.org/10.1016/j.jmaa.2011.07.058}

\bibitem{Bauer-Le}
Bauer, W.,
Lee, Y.J.:
Commuting Toeplitz operators on the Segal--Bargmann
space.
J. Funct. Anal. \textbf{260}(2), 460--489 (2011). 
\url{https://doi.org/10.1016/j.jfa.2010.09.007}


\bibitem{Berger-Coburn}
Berger, C.A.,
Coburn, L.A.:
Toeplitz operators and quantum mechanics.
J. Funct. Anal. \textbf{68}(3), 273--299 (1986).
\url{https://doi.org/10.1016/0022-1236(86)90099-6}

\bibitem{Coburn}
Coburn, L.A.:
Deformation estimates for the Berezin--Toeplitz quantization.
Commun. Math. Phys. \textbf{149}, 415--424 (1992).

\bibitem{Cobzas-Miculescu-Nicolae}
Cobza\c{s}, \c{S}.,
Miculescu, R.,
and Nicolae, A.:
Lipschitz functions.
Lecture notes in mathematics. Springer Nature,
Switzerland AG (2019).

\bibitem{dewage2024}
Dewage, V.,
Mitkovski, M.:
The Laplacian of an operator and the radial Toeplitz
algebra. J. Math. Anal. Appl. \textbf{547}(1), 129245 (2025).
\url{https://doi.org/10.1016/j.jmaa.2025.129245}

\bibitem{Dewage-Olafsson}
Dewage, V.,
\'{O}lafsson, G.:
Toeplitz operators on the Fock space with quasi-radial symbols.
Complex Anal. Oper. Theory \textbf{16}, 61 (2022). \url{https://doi.org/10.1007/s11785-022-01208-9}

\bibitem{Doman}
Doman, B.G.S.:
The Classical Orthogonal Polynomials. World Scientific, Singapore (2015).
\url{https://doi.org/10.1142/9700}

\bibitem{M-Englis}
Engli\v{s}, M.:
Toeplitz operators and group representations.
J. Fourier Anal. Appl.
\textbf{13}, 243--265 (2007).
\url{https://doi.org/10.1007/s00041-006-6009-x}

\bibitem{Esmeral-Maximenko-2014}
Esmeral, K.,
Maximenko, E.A.:
C*-algebra of angular Toeplitz operators on Bergman spaces over the upper half-plane.
Commun. Math. Anal.
\textbf{17}(2), 151--162  (2014).
\url{http://projecteuclid.org/euclid.cma/1418919761}

\bibitem{Esmeral-Maximenko-2016}
Esmeral, K.,
Maximenko, E.A.:
Radial Toeplitz operators on the Fock space and
square-root-slowly oscillating sequences.
Complex Anal. Oper. Theory 
\textbf{10}, 1655--1677 (2016).
\url{https://doi.org/10.1007/s11785-016-0557-0}

\bibitem{Esmeral-Maximenko-Vasilevski-2015}
Esmeral, K.,
Maximenko, E.A.,
Vasilevski, N.:
C*-algebra generated by angular Toeplitz operators on the weighted Bergman spaces over the upper half-plane.
Integr. Equ. Oper. Theory 
\textbf{83}, 413--428 (2015).
\url{https://doi.org/10.1007/s00020-015-2243-4}

\bibitem{Esmeral-Vasilevski-2016}
Esmeral, K.,  Vasilevski, N.:
C*-algebra generated by horizontal Toeplitz operators on the Fock space.
Bol. Soc. Mat. Mex.  
\textbf{22}, 567--582 (2016).
\url{https://doi.org/10.1007/s40590-016-0110-1}

\bibitem{Fock}
Fock, V.:
Konfigurationsraum und zweite Quantelung.
Z. Physik
\textbf{75}(9), 622--647 (1932). \url{https://doi.org/10.1007/BF01344458}

\bibitem{Fulsche}
Fulsche, R.:
Correspondence theory on $p$-Fock spaces
with applications to Toeplitz algebras.
J. Funct. Anal. \textbf{279}(7),  108661 (2020).
\url{https://doi.org/10.1016/j.jfa.2020.108661}



\bibitem{GrudskyKaraVasilevski}
Grudsky, S.,
Karapetyants, A.,
Vasilevski, N.:
Dynamics of properties of Toeplitz
operators with radial symbols.
Integr. Equ. Oper. Theory
\textbf{50}, 217--253 (2004).
\url{https://doi.org/10.1007/s00020-003-1295-z}
\bibitem{GrudskyKaraVasilevski-2}
Grudsky, S.,
Karapetyants, A.,
Vasilevski, N.:
Dynamics of properties of Toeplitz operators on the upper half-plane: Parabolic case. J. Operator Theory, \textbf{52}(1), 185--204 (2004).
\url{https://www.jstor.org/stable/24718968}

\bibitem{GrudskyKaraVasilevski-3}
Grudsky, S.,
Karapetyants, A.,
Vasilevski, N.:
Dynamics of properties of Toeplitz operators on the upper half-plane: Hyperbolic case. Bol. Soc. Mat. Mex. \textbf{10}(3), 119--138 (2004).
\url{}

\bibitem{GrudskyMaximenkoVasilevski2013}
Grudsky, S.M.,
Maximenko, E.A.,
Vasilevski, N.L.:
Radial Toeplitz operators on the unit ball and slowly oscillating sequences.
Commun. Math. Anal. \textbf{14}(2), 77--94 (2013). \url{https://projecteuclid.org/euclid.cma/1356039033}

\bibitem{GrudskyQuirogaVasilevski2006}
Grudsky, S.,
Quiroga-Barranco, R.,
Vasilevski, N.:
Commutative $C^*$-algebras of Toeplitz operators and quantization on the unit disk.
J. Funct. Anal.
\textbf{234}(1), 1--44 (2006).
\url{https://doi.org/10.1016/j.jfa.2005.11.015}

\bibitem{Grudsky-Vasilevski}
Grudsky, S.M., Vasilevski, N.L.:
Toeplitz operators on the Fock space:
Radial component effects.
Integr. Equ. Oper. Theory
\textbf{44}, 10--37 (2002).
\url{https://doi.org/10.1007/BF01197858}

\bibitem{Grudsky-Vasilevski-2}
Grudsky, S., Vasilevski, N.:
Bergman--Toeplitz operators: Radial component influence.
Integr. Equ. Oper. Theory \textbf{40}, 16--33 (2001).
\url{https://doi.org/10.1007/BF01202952}

\bibitem{Herrera-Maximenko-Hutnik}
Herrera-Ya\~nez, C.,  Hutn\'{i}k, O., Maximenko, E.A.:
Vertical symbols, Toeplitz operators on weighted Bergman spaces over the upper half-plane and very slowly oscillating functions.
C. R. Acad. Sci. Paris, Ser. I \textbf{352}, 129--132 (2014).
\url{https://doi.org/10.1016/j.crma.2013.12.004}

\bibitem{Crispin-Vasilevski-Maximenko}
Herrera-Ya\~nez, C.,  Maximenko, E.A., Vasilevski, N.L.:
Vertical Toeplitz operators on the upper half-plane and very slowly oscillating functions.
Integr. Equ. Oper. Theory \textbf{77}, 149--166 (2013).
\url{https://doi.org/10.1007/s00020-013-2081-1}

\bibitem{Isra-Zhu} Isralowitz, J., Zhu, K.: Toeplitz operators on Fock spaces. Integr. Equ. Oper.
Theory \textbf{66}, 593--611 (2010).
\url{https://doi.org/10.1007/s00020-010-1768-9}

\bibitem{KorenblumZhu1995}
Korenblum, B.,
Zhu, K.:
An application of Tauberian theorems to Toeplitz operators.
J. Oper. Theory
\textbf{33}, 353--361 (1995).
\url{https://jot.theta.ro/jot/archive/1995-033-002/1995-033-002-010.html}

\bibitem{Ramirez-Rossini-Sanmartino} Ramírez, R.A.,  Rossini, G.L.  Sanmartino, M.: 
A discussion on the natural domain of radial Toeplitz operators in Segal-Bargmann space. Int. J. Pure Appl. Math. \textbf{76}(1), 79--94 (2012). \url{https://ijpam.eu/contents/2012-76-1/8/index.html}

\bibitem{Segal}
Segal, I.:
Lectures at the summer seminar on applied math.
Boulder, Colorado (1960).

\bibitem{SuarezD}
Su\'{a}rez, D.:
The eigenvalues of limits of radial Toeplitz operators.
Bull. London Math. Soc.
\textbf{40}(4), 631--641 (2008).
\url{https://doi.org/10.1112/blms/bdn042}

\bibitem{Szego}
Szeg\H{o}, G.:
Orthogonal Polynomials, 4th ed.
Amer. Math. Soc., Providence, RI (1975).

\bibitem{Timan}
Timan, A.F.:
Theory of approximation of functions of a real variable.
A volume in International Series of Monographs on Pure and Applied Mathematics. Elsevier Ltd. (1963)

\bibitem{VasilevskiBook}
Vasilevski, N.L.:
Commutative algebras of Toeplitz operators on the Bergman space.
In series: Operator theory: Advances and Applications,
vol. 185.
Birkh\"{a}user, Basel (2008).

\bibitem{Zorboska}
Zorboska, N.:
The Berezin transform and radial operators.
Proc. Amer. Math. Soc. \textbf{131}(3), 793--800 (2003).
\url{https://doi.org/10.1090/S0002-9939-02-06691-1}

\bibitem{Zhu2012Fock}
Zhu, K.: Analysis on Fock Spaces. 
Springer, New York, NY (2012).

\end{thebibliography}
\end{document}